\newtheorem{theorem}{Theorem}
\newtheorem{lemma}{Lemma}
\newtheorem{definition}{Definition}
\newtheorem{remark}{Remark}
\newtheorem{corollary}{Corollary}
\newcommand{\leqnomode}{\tagsleft@true}
\newcommand{\reqnomode}{\tagsleft@false}
\def\({\begin{eqnarray}}
\def\){\end{eqnarray}}
\def\[{\begin{eqnarray*}}
\def\]{\end{eqnarray*}}
\def\part#1#2{\frac{\partial #1}{\partial #2}}
\def\R{\mathbb{R}}
\def\d{\mathrm{d}}
\def\tot#1#2{\frac{\d #1}{\d #2}}
\def\eps{\varepsilon}
\def\wpsi{\widetilde{\psi}}
\def\wv{\widetilde{v}}
\def\L{\mathcal{L}}
\def\rev#1{\textcolor{red}{#1}}
\begin{document}

\title{Asymptotic flocking in the
Cucker-Smale model with reaction-type delays
in the non-oscillatory regime}   
\author{Jan Haskovec \qquad Ioannis Markou}         
\date{February 2020}  
\maketitle

\begin{abstract}
We study a variant of the Cucker-Smale system with reaction-type delay.
Using novel backward-forward and stability estimates
on appropriate quantities
we derive sufficient conditions
for asymptotic flocking of the solutions.
These conditions, although not explicit,
relate the velocity fluctuation of the initial datum
and the length of the delay.
If satisfied, they guarantee monotone decay (i.e., non-oscillatory regime)
of the velocity fluctuations towards zero for large times.
For the simplified setting with only two agents
and constant communication rate
the Cucker-Smale system reduces to the delay negative
feedback equation. We demonstrate that in this case our method
provides the sharp condition for the size of the delay
such that the solution be non-oscillatory.
Moreover, we comment on the mathematical issues
appearing in the formal macroscopic description of the
reaction-type delay system.
\end{abstract}
\vspace{2mm}

\textbf{Keywords}: Cucker-Smale system, flocking, time delay, velocity fluctuation.
\vspace{2mm}

\textbf{2010 MR Subject Classification}: 34K05, 82C22, 34D05, 92D50.
\vspace{2mm}

\section{Introduction}\label{sec:Intro}
The study of collective behavior of autonomous self-propelled agents has
attracted significant interest in various scientific disciplines,
such as biology, sociology, robotics, economics etc. The main
motivation is to model and explain the emergence of
self-organized patterns on the global scale, while individual agents
typically interact only locally. From the vast amount of literature
on mathematical theory of collective phenomena in biology, social sciences
and engineering we refer to the
relatively recent surveys \cite{Naldi-Pareschi-Toscani,
Pareschi-Toscani-survey, Vicsek-survey, ChHaLi} and the references
therein. The newest developments are captured in, e.g.,
\cite{Ma,PiTr, PiVa18, PiVa, Ha1, Kalise, DHK19}.

The Cucker-Smale model is a
prototypical model of consensus seeking,
or, in physical context, velocity alignment.
Introduced in  \cite{CuSm1, CuSm2}, it has been extensively
studied in many variants, where the main point of interest
is the asymptotic convergence of the (generalized) velocities
towards a consensus value.
In this paper we focus on a variant of the Cucker-Smale model with reaction-type delays
with a fixed delay $\tau>0$.
We consider $N \in\mathbb{N}$ autonomous agents described by their phase-space
coordinates $(x_i(t), v_i(t))\in\mathbb{R}^{2d}$, $i=1,2,\cdots,N$,
$t\geq 0$, where $x_i(t) \in \mathbb{R}^{d}$, resp. $v_i(t) \in
\mathbb{R}^{d}$, are time-dependent position, resp. velocity,
vectors of the $i$-th agent,
and $d\geq 1$ is the physical space dimension.
For notational convenience, we shall suppress the explicit time
dependence of the phase space coordinates in the sequel, i.e. we shall
write $x_{i}$ instead of $x_{i}(t)$ and $v_{i}$ instead of
$v_{i}(t)$. Moreover, we use the symbol \, $\widetilde{}$ \,  to
denote the value of a variable at time $t-\tau$, i.e.
$\widetilde{x}_{i}:=x_{i}(t-\tau)$ and
$\widetilde{v}_{i}:=v_{i}(t-\tau)$, where $\tau>0$ is a fixed time delay.

We shall study the following Cucker-Smale type system
\(
      \dot{x}_{i} &=& v_{i} \label{CS_delay1} \\
      \dot{v}_{i} &=& \frac{\lambda}{N}\sum\limits_{j=1}^N
      \psi(|\widetilde{x}_{i}-\widetilde{x}_{j}|)
      (\widetilde{v}_{j}-\widetilde{v}_{i}), \label{CS_delay2}
\)
for $i=1,2,\cdots,N$, with the parameter $\lambda>0$.
As initial data we prescribe the initial
position and velocity trajectories for $i=1,2,\cdots,N$,
\( \label{CS_delay_IC}
     (x_{i}(t),v_{i}(t)) \equiv (x_{i}^0(t),v_{i}^0(t)) \qquad \mbox{for } t \in [-\tau,0],
\)
with $(x_{i}^0(s), v_{i}^0(s)) \in C([-\tau,0];\mathbb{R}^{2d})\cap C^1((-\tau,0);\mathbb{R}^{2d})$.
A generic case is given by constant initial trajectories
 $(x_i^0 ,v_i^0) \in\mathbb{R}^{2d}$, $i=1,2,\cdots,N$.
The function $\psi: [0,\infty) \to (0,\infty)$ is a
positive nonincreasing differentiable function that models the communication rate
between two agents $i$, $j$, in dependence of their 
metric distance. For notational convenience, we shall denote
\[
   \psi_{ij}:=\psi(|x_{i}-x_{j}|) \qquad\mbox{and}\qquad
   \widetilde{\psi}_{ij}:=\psi(|\widetilde{x}_{i}-\widetilde{x}_{j}|),
\]
where $|\cdot|$ denotes the Euclidean distance in $\mathbb{R}^d$.
In our paper we shall introduce the following three assumptions on $\psi=\psi(r)$,
namely, that
\( \label{ass:psi0}
   \psi(r) \leq 1 \qquad\mbox{for all } r\geq 0,
   \)
that there exist some $\gamma<1$ and $c, R>0$ such that
\( \label{ass:psi1}
   \psi(r) \geq c r^{-1+\gamma} \qquad\mbox{for all } r\geq R,
\)
and that there exists $\alpha>0$ such that
\( \label{ass:psi2}
   \psi'(r) \geq -\alpha \psi(r) \qquad\mbox{for all } r > 0.
\)
The prototype rate considered by Cucker and Smale in \cite{CuSm1, CuSm2}
and many subsequent papers is of the form
\begin{equation}
\label{CS_cw}
   \psi(r)=\frac{1}{(1+r^2)^{\beta}},
\end{equation}
with the exponent $\beta\geq 0$.
The assumption \eqref{ass:psi1} is verified for \eqref{CS_cw} if $\beta<1/2$,
while assumption \eqref{ass:psi2} is satisfied for all $\beta\geq 0$ by choosing $\alpha:=2\beta$.
Let us point out that the results of our paper are not restricted to the particular form \eqref{CS_cw}
of the communication rate.


The system \eqref{CS_delay1}--\eqref{CS_delay2} represents a model
for flocking or consensus dynamics, where the agents react to the
information perceived from their surroundings with a given
\emph{processing or reaction delay} $\tau>0$, assumed to be constant for all
agents. This is in contrast to the \emph{transmission delay}, caused
by finite propagation speed of information, which would induce
delays depending on the distance between agents (i.e.,
state-dependent delay). In the model
\eqref{CS_delay1}--\eqref{CS_delay2} we assume instantaneous
propagation of information between agents, which is certainly
justified for visual communication in animal groups or radio
communication in groups of terrestrial vehicles or robots. On the
other hand, reaction times for both living agents and artificial robots
are positive and might have a notable effect on the collective behavior of the system.
Therefore, we consider them as the only source of delay in our system.

The main objective in the study of Cucker-Smale type models is their
asymptotic behavior, in particular, the concept of \emph{conditional
or unconditional flocking}. In agreement with \cite{CuSm1, CuSm2}
and many subsequent papers,
we say that the system exhibits flocking behavior if there is
asymptotic alignment of velocities and the particle group stays
uniformly bounded in time.

\begin{definition} \label{def:flocking}
We say that the particle system \eqref{CS_delay1}--\eqref{CS_delay2}
exhibits \emph{flocking} if its solution $(x(t),v(t))$ satisfies
\begin{equation*}
   \lim_{t \to \infty} |v_i-v_j| =0, \qquad
   \sup_{t \geq 0} |x_i-x_j| < \infty,
\end{equation*}
for all $i,j = 1,2,\cdots,N$.
\end{definition}

The term \emph{unconditional flocking} refers to
the case when flocking behavior takes place for all initial
conditions, independently of the value of the parameters $\lambda>0$
and $N\in\mathbb{N}$. The celebrated result of Cucker and Smale
\cite{CuSm1, CuSm2} states that the system
\eqref{CS_delay1}--\eqref{CS_delay2} without delay (i.e., $\tau=0$)
with the communication rate \eqref{CS_cw} exhibits unconditional
flocking if and only if $\beta < 1/2$. For $\beta \geq 1/2$ the
asymptotic behavior depends on the initial configuration and the
particular value of the parameters $\lambda>0$ and $N\in\mathbb{N}$.
In this case we speak about \emph{conditional flocking}. The proof
of Cucker and Smale (and its subsequent variants, see \cite{HaTa,
CaFoRoTo}) is based on a bootstrapping argument, estimating, in
turn, the quadratic fluctuations of positions and velocities,
and showing that the velocity fluctuations decay
monotonically to zero as $t\to\infty$.

Let us point out that the presence of delays,
like in our system \eqref{CS_delay1}--\eqref{CS_delay2},
may lead to oscillations of velocity fluctuations, i.e.,
they may be no more monotonically decreasing in time.
Monotone decay can only be expected for small enough values
of the delay $\tau>0$ and particular initial data.
The main task of this paper is to develop analytical methods
for the non-oscillatory regime of the velocity fluctuations.
In particular, we shall prove that for each initial datum $(x^0(s),v^0(s))$
satisfying a certain condition,
there exists a critical delay $\tau_c>0$ such that the solution of the system
\eqref{CS_delay1}--\eqref{CS_delay2} exhibits flocking in the sense
of Definition~\ref{def:flocking} whenever $\tau<\tau_c$.
Moreover, the quadratic velocity fluctuations decay monotonically
to zero as $t\to\infty$ with exponential rate.
The proof shall be based on novel type estimates
of the local growth of the functional $D=D(t)$ defined below \eqref{def:D}.
The core idea of our method will be explained in Section \ref{sec:simple}
where we consider the simplified case $N=2$, $\psi\equiv 1$
leading to the delay negative feedback equation.
We shall show that our method provides sharp value
of the critical time delay for the non-oscillatory regime
of delay negative feedback.

Flocking in Cucker-Smale type models with delay and renormalized
communication weights was recently studied in \cite{LiWu, ChHa1}.
Both these papers consider the case where the delay in the
velocity equation for the $i$-th agent is present only in the
$v_j$-terms for $j\neq i$.
This allows for using convexity arguments
to conclude a-priori uniform boundedness of the velocities.
Such convexity arguments are not applicable for our
system \eqref{CS_delay1}--\eqref{CS_delay2}.
In \cite{ChHa2} the method is extended to the mean-field limit ($N\to\infty$) of the model.
In \cite{ChLi} the authors consider heterogeneous delays both in the
$x_j$ and $v_j$ terms and they prove asymptotic flocking for small
delays with the weights \eqref{CS_cw}. A system with time-varying
delays was studied in \cite{PiTr}, under the a-priori assumption
that the Fiedler number (smallest positive eigenvalue) of the
communication matrix $(\psi_{ij})_{i,j=1}^N$ is uniformly bounded
away from zero. The same assumption is made in \cite{ErHaSu}
for a Cucker-Smale type system with delay and multiplicative noise.
The validity of this relatively strong assumption
would typically be guaranteed by making the communication rates
$\psi_{ij}$ a-priori bounded away from zero, which excludes the
generic choice \eqref{CS_cw} for $\psi$. The main advantage of our
method is that we do not require such a-priori boundedness.

Let us now introduce for $t\geq-\tau$ the quadratic fluctuation of the velocities
\(  \label{def:V}
   V(t):=\frac{1}{2}\sum_{i=1}^N \sum_{j=1}^N |v_{i}-v_{j}|^{2}
\)
and the quantity
\(  \label{def:D}
   D(t):=\frac{1}{2} \sum_{i=1}^N \sum_{j=1}^N {\psi}_{ij} |{v}_{j} -{v}_{i}|^{2}.
\)
Moreover, let us define the quantity
\( \label{def:L0}
      L^0 := (2\lambda\tau + 1) e^{2\lambda\tau} \max_{\vartheta\in [-\tau,0]} V(\vartheta) + 4\tau\lambda^3 \int_{-\tau}^{0}\int_{\theta}^{0}  {D}(s) \, \d s \, \d\theta,
\)
which is calculated solely from the initial datum $(x^0(s), v^0(s))$.
Note that $L^0=V(0)$ for $\tau=0$.

Our main result is the following:

\begin{theorem} \label{thm:main}
Let the communication rate $\psi=\psi(r)$ verify the assumptions \eqref{ass:psi0}--\eqref{ass:psi2}.
Let the initial datum $(x^0(s),v^0(s))$ be such that
\(   \label{ass:main1}
   M^0 := \max \left\{\sup_{s \in (-\tau,0)} \frac{|\dot{D}(s)|}{D(s)},
                 \frac{|\dot{D}(0^+)|}{D(0)}
              \right\} < \infty,
\)
with $D$ given by \eqref{def:D} and $\dot D(0+)$ denoting the right derivative of $D$ at $t=0$.
Then, there exists a critical delay $\tau_c>0$,
depending on the initial datum through $L^0$ and $M^0$
and calculable as a solution of a system of two nonlinear algebraic equations,
such that if $\tau<\tau_c$,
the solution of the system \eqref{CS_delay1}--\eqref{CS_delay2}
subject to initial datum \eqref{CS_delay_IC}
exhibits flocking behavior in the sense of Definition \ref{def:flocking}.
Moreover, the quadratic velocity fluctuation $V=V(t)$ decays monotonically
and exponentially to zero as $t\to\infty$.
\end{theorem}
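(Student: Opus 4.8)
The plan is to close a differential inequality $\dot V(t)\le -\kappa V(t)$ for some $\kappa>0$, valid for all $t\ge 0$ once $\tau$ lies below a threshold; monotone exponential decay of $V$ then follows, and boundedness of the positions is recovered by integrating $|v_i-v_j|\le\sqrt{2V}$, which gives flocking in the sense of Definition~\ref{def:flocking}. The first observation is that, since $\wpsi_{ij}(\wv_j-\wv_i)$ is antisymmetric in $i,j$, the mean velocity $\bar v:=\frac1N\sum_i v_i$ is conserved even in the presence of the delay. Differentiating \eqref{def:V}, inserting \eqref{CS_delay2}, and using $\sum_j(v_i-v_j)=N(v_i-\bar v)$ yields the exact identity
\begin{equation*}
\dot V(t)=2\lambda\sum_{i,j=1}^N\wpsi_{ij}\,(v_i-\bar v)\cdot(\wv_j-\wv_i).
\end{equation*}
Splitting $v_i-\bar v=(\wv_i-\bar v)+(v_i-\wv_i)$ and symmetrizing in $i,j$, the first contribution collapses to the principal dissipative term $-2\lambda D(t-\tau)$, while the second forms a remainder $\mathcal{R}(t)=2\lambda\sum_{i,j}\wpsi_{ij}(v_i-\wv_i)\cdot(\wv_j-\wv_i)$ that gathers the delay-induced discrepancy through $v_i-\wv_i=\int_{t-\tau}^t\dot v_i\,\d s$.

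The next block provides a priori control. From $\wpsi\le 1$ (assumption \eqref{ass:psi0}) and Cauchy-Schwarz in \eqref{CS_delay2} one gets the pointwise bound $|\dot v_i(t)|\le\lambda\sqrt{2V(t-\tau)/N}$, and more sharply $|\dot v_i(t)|\le\lambda\sqrt{2D(t-\tau)/N}$; feeding the former into a Gronwall/continuation argument on the retarded system produces the uniform a priori bound $V(t)\le L^0$ for all $t\ge 0$, which is exactly what the quantity \eqref{def:L0} is designed to deliver. The sharper bound then controls the backward-forward increment $v_i(t)-\wv_i(t)=\int_{t-\tau}^t\dot v_i\,\d s$ in terms of $D$ over the preceding delay window, so that the remainder $\mathcal{R}$ acquires an explicit prefactor of order $\lambda\tau$ relative to $D$. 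Finally, integrating $|v_i-v_j|\le\sqrt{2V}$ in time and invoking the fat-tail assumption \eqref{ass:psi1} yields a uniform lower bound $\psi_{ij}\ge\psi_{\min}>0$, whence $\psi_{\min}V(t)\le D(t)\le V(t)$; this comparability lets $-2\lambda D$ dominate $\mathcal{R}$ and be converted into genuine decay of $V$.

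The heart of the argument is to bound $\mathcal{R}(t)$ and the mismatch $D(t-\tau)-D(t)$ by a small multiple of $D(t)$, which forces one to control the growth of $D$ itself. The novel step is to propagate the logarithmic-derivative bound encoded in $M^0$ from the initial window $(-\tau,0)$ to all later times. Differentiating \eqref{def:D} and using $\dot\psi_{ij}=\psi'(|x_i-x_j|)\frac{(x_i-x_j)\cdot(v_i-v_j)}{|x_i-x_j|}$ together with assumption \eqref{ass:psi2} in the form $\dot\psi_{ij}\ge-\alpha\psi_{ij}|v_i-v_j|$, one closes a self-referential estimate that keeps $|\dot D(t)|/D(t)$ bounded by a constant depending only on $\lambda,\alpha,L^0,M^0$. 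With this in hand one has $D(t-\tau)=D(t)\exp\bigl(O(\tau)\bigr)$ and $\mathcal{R}(t)$ bounded by $C\tau D(t)$, turning the identity into $\dot V(t)\le-\bigl(2\lambda-C\tau\bigr)D(t)\le-\psi_{\min}\bigl(2\lambda-C\tau\bigr)V(t)$ with $C=C(\lambda,\alpha,L^0,M^0)$. The critical delay $\tau_c$ is then the largest $\tau$ for which the decay coefficient remains positive while the propagated bound on $|\dot D|/D$ stays self-consistent; since these two requirements couple the decay rate to the growth constant, $\tau_c$ is obtained as the solution of a pair of nonlinear algebraic equations, precisely as claimed.

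I expect the propagation of the bound on $|\dot D|/D$ to be the main obstacle. In contrast to the renormalized-weight systems of \cite{LiWu,ChHa1}, the model \eqref{CS_delay1}--\eqref{CS_delay2} admits no convexity argument for an a priori velocity bound, so the dissipation $D$ must be governed by its own differential inequality, whose right-hand side again involves the retarded dynamics and hence $D$ across a full delay window; closing this loop without sacrificing the sharp constant is the delicate point. As a consistency check I would specialize to $N=2$ and $\psi\equiv1$, where $w:=v_1-v_2$ solves the delay negative feedback equation $\dot w(t)=-\lambda w(t-\tau)$ and the scheme must recover its sharp non-oscillation threshold $\tau_c=1/(e\lambda)$, in accordance with Section~\ref{sec:simple}.
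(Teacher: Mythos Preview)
Your decomposition $\dot V=-2\lambda\widetilde D+\mathcal R$, the Gronwall-type bound $V\le L^0$, and the propagation of a self-consistent bound on $|\dot D|/D$ (giving $\widetilde D\approx e^{O(\tau)}D$) are exactly the paper's strategy; these are Lemmas~\ref{lem:dV}--\ref{lem:estV1} and Lemmas~\ref{lem:D_ineq}--\ref{lem:estD3}, and you correctly anticipate that the two nonlinear equations arise from the self-consistency of the growth bound on $D$ together with the positivity of the decay coefficient.

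There is, however, a genuine gap in the passage ``integrating $|v_i-v_j|\le\sqrt{2V}$ in time and invoking the fat-tail assumption \eqref{ass:psi1} yields a uniform lower bound $\psi_{ij}\ge\psi_{\min}>0$.'' At that stage you only know $V\le L^0$, so integration gives $|x_i-x_j|\le d_X(0)+\sqrt{2L^0}\,t$, which grows \emph{linearly}. Assumption \eqref{ass:psi1} then yields only the time-dependent lower bound $\psi_{ij}(t)\gtrsim t^{-1+\gamma}$, which tends to zero; there is no uniform $\psi_{\min}>0$, and you cannot convert $-\omega D(t)$ into $-\omega\psi_{\min}V(t)$ directly. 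Your argument is circular here: the uniform position bound needs decay of $V$, but your decay of $V$ already uses the uniform position bound.

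The paper resolves this by a two-step bootstrap (Section~\ref{subsec:EstV}). First, from $\dot V\le -\omega D\le -\omega\varphi(t)V$ with the \emph{decaying} bound $\varphi(t)\ge\psi(d_X(0)+\sqrt{2L^0}\,t)\gtrsim t^{-1+\gamma}$, one integrates to get the stretched-exponential decay $V(t)\lesssim\exp(-\omega' t^\gamma)$. This is fast enough to make $\int_0^\infty\sqrt{V(s)}\,\d s<\infty$, hence $d_X(t)\le\bar d_X<\infty$ uniformly. Only then does one obtain $\varphi(t)\ge\psi(\bar d_X)>0$ and upgrade to genuine exponential decay $V(t)\le V(\tau)\exp(-\omega\psi(\bar d_X)(t-\tau))$. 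Your sketch should insert this bootstrap between the inequality $\dot V\le-(2\lambda-C\tau)D$ and the final exponential bound.
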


The value of the critical delay $\tau_c$ cannot be given explicitly,
as it is a solution of a system of two highly nonlinear algebraic equations.
However, the proof of Theorem \ref{thm:main} is constructive in the sense
that it gives a detailed recipe how $\tau_c$ can be found given the values
of $\lambda$, $L^0$ and $M^0$. Here we merely remark that $\tau_c$
decreases with increasing values of these three quantities.
Moreover, let us point out that the value of $L^0$ explicitly depends
on the velocity fluctuations of the initial datum. This is corresponds to the
statistical mechanics intuition about synchronization systems - namely,
that asymptotic consensus may not be reached if initially the system is
in a strongly disordered state. The natural measure of order in this context is the velocity
fluctuation. We refer to \cite{Kuramoto} where a rigorous connection between the
Cucker-Smale flocking model and the Kuramoto synchronization model was established.
Moreover, let us note that the velocity fluctuation of the initial datum also appears
in the sufficient condition for conditional flocking derived in the original papers
by Cucker and Smale \cite{CuSm1, CuSm2} as well as in \cite{HaLiu}.

In the generic case of constant initial datum \eqref{CS_delay_IC},
condition \eqref{ass:main1} simplifies to
\[
   M^0 := \frac{|\dot{D}(0^+)|}{D(0)} < \infty,
\]
and, as we shall demonstrate in Remark \ref{rem:constantIC}, it is implicitly verified.
We can thus formulate the following:

\begin{corollary}
Let the communication rate $\psi=\psi(r)$ verify the assumptions \eqref{ass:psi0}--\eqref{ass:psi2}
and let the initial datum $(x^0,v^0)$ be constant on $[-\tau,0]$.
Then, there exists a critical delay $\tau_c>0$
such that if $\tau<\tau_c$,
the solution of the system \eqref{CS_delay1}--\eqref{CS_delay2}
subject to initial datum \eqref{CS_delay_IC}
exhibits flocking behavior in the sense of Definition \ref{def:flocking}.
Moreover, the quadratic velocity fluctuation $V=V(t)$ decays monotonically
and exponentially to zero as $t\to\infty$.
\end{corollary}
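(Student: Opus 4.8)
The plan is to deduce the Corollary directly from Theorem~\ref{thm:main}: since the assumptions \eqref{ass:psi0}--\eqref{ass:psi2} on $\psi$ are already in force, the only thing left to verify is that a datum which is constant on $[-\tau,0]$ automatically satisfies the structural hypothesis \eqref{ass:main1}, i.e. that $M^0<\infty$. Once this is checked, Theorem~\ref{thm:main} applies verbatim and yields both the flocking behaviour and the monotone exponential decay of $V$.

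First I would note that if $x_i^0(s)\equiv x_i^0$ and $v_i^0(s)\equiv v_i^0$ do not depend on $s\in[-\tau,0]$, then both $\psi_{ij}=\psi(|x_i^0-x_j^0|)$ and $|v_j^0-v_i^0|^2$ are constant there, so the functional $D$ from \eqref{def:D} is constant on $[-\tau,0]$. Hence $\dot D(s)=0$ for all $s\in(-\tau,0)$, the supremum term in the definition \eqref{ass:main1} of $M^0$ vanishes, and $M^0$ reduces to $|\dot D(0^+)|/D(0)$, exactly as stated. It then remains to show this ratio is finite.

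Next I would compute the one-sided derivative $\dot D(0^+)$ by applying the product rule to \eqref{def:D}, obtaining one contribution carrying $\dot\psi_{ij}$ and one carrying $\dot v_i-\dot v_j$. For the first contribution I would use $\dot\psi_{ij}=\psi'(|x_i-x_j|)\,(x_i-x_j)\cdot(v_i-v_j)/|x_i-x_j|$ together with assumptions \eqref{ass:psi0} and \eqref{ass:psi2}, which (recalling that $\psi$ is nonincreasing) give $|\psi'|\le\alpha\psi\le\alpha$; by the Cauchy--Schwarz inequality this bounds the geometric factor by $|v_i-v_j|$, so the whole term is controlled by the finite initial velocity fluctuations. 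For the second contribution, at $t=0^+$ the quantity $\dot v_i-\dot v_j$ is given by the right-hand side of \eqref{CS_delay2} evaluated on the constant datum, namely a finite combination of the weights $\widetilde\psi_{ij}\le 1$ and the bounded differences $v_j^0-v_i^0$. Therefore $\dot D(0^+)$ is finite.

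The only genuine issue is the denominator: the ratio $|\dot D(0^+)|/D(0)$ is well defined only when $D(0)>0$. But if $D(0)=0$, then by positivity of $\psi$ in \eqref{def:D} all initial velocity differences vanish, the datum is already in consensus, and flocking holds trivially with $V\equiv 0$. In the complementary case $D(0)>0$ the computation above yields $M^0<\infty$, hypothesis \eqref{ass:main1} is verified, and the Corollary is immediate from Theorem~\ref{thm:main}. I expect this finiteness check --- essentially the content of the forthcoming Remark~\ref{rem:constantIC} --- to be the only nontrivial point, and it is genuinely routine.
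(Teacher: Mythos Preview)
Your proposal is correct and follows essentially the same route as the paper: reduce to checking \eqref{ass:main1} for the constant datum and then invoke Theorem~\ref{thm:main}. The only cosmetic difference is that the paper (in Remark~\ref{rem:constantIC}) bounds $|\dot D(0^+)|/D(0)$ by appealing to Lemma~\ref{lem:D_ineq} with $\eps=\lambda$ and $\widetilde D(0)=D(0)$, whereas you do the same estimate by hand; your explicit treatment of the degenerate case $D(0)=0$ is a welcome addition.
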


The paper is organized as follows. In Section \ref{sec:simple}
we consider the simplified case $N=2$, $\psi\equiv 1$ to demonstrate
the main idea of our approach without having to tackle the technicalities
of the general case. In Section \ref{sec:flocking} we apply the method
to prove the existence of a critical delay for asymptotic flocking in
the Cucker-Smale type system \eqref{CS_delay1}--\eqref{CS_delay2}.
Finally, in Section \ref{sec:meanfield} we make a short remark
about its formal mean-field limit and the complications cause
by the presence of reaction-type delay.

\section{The simplified case - delay negative feedback}\label{sec:simple}
To gain insight into the main idea of our analytical approach,
let us consider the maximally simplified version of \eqref{CS_delay1}--\eqref{CS_delay2},
namely, the setting with two agents only, $N=2$, living on a line, $d=1$,
and constant communication rate $\psi \equiv 1$.
Then, \eqref{CS_delay1} decouples from \eqref{CS_delay2}
and setting $u(t) := v_1(t) - v_2(t)$, we obtain from \eqref{CS_delay2}
the \emph{delay negative feedback equation}
\( \label{feedback}
   \dot u(t) = -\lambda u(t-\tau),
\)
with $\lambda>0$.
For simplicity, we shall consider the constant initial datum $0\neq u^0 \in\R$,
\(  \label{feedback_IC}
   u(t) \equiv u^0\qquad \mbox{for } t\in [-\tau,0].
\)
Delay negative feedback is arguably the simplest nontrivial delay differential equation.
Despite its simplicity, it exhibits a surprisingly rich qualitative dynamics.
Rescaling of time $t \mapsto t/\tau$ in \eqref{feedback} leads to
\[
    \dot u(t) = -\lambda\tau u(t-1),
\]
while the rescaling $t\mapsto \lambda t$ gives
\[
    \dot u(t) = - u(t-\lambda\tau).
\]
Consequently, the dynamics of \eqref{feedback} depends only on the single parameter $\lambda\tau > 0$.
However, we prefer to keep \eqref{feedback} in its non-rescaled form in order to easily relate the analysis
performed in this section to the rest of the paper.
An analysis of the corresponding characteristic equation
\[
   z + \lambda\tau e^{-z} = 0,
\]
where $z\in\mathbb{C}$, reveals that:
\begin{itemize}
\item
If $0 < \lambda\tau < e^{-1}$, then $u=0$ is asymptotically stable.
\item
If $e^{-1} < \lambda\tau < \pi/2$, then $u=0$ is asymptotically stable,
but every nontrivial solution of \eqref{feedback} is oscillatory.
\item
If $\lambda\tau > \pi/2$, then $u=0$ is unstable.
\end{itemize}
In fact, if $\lambda\tau < e^{-1}$, the solutions subject to the constant initial datum
never oscillate and tend to zero as $t\to\infty$.
If $\lambda\tau$ becomes larger than $e^{-1}$ but
smaller than $\pi/2$, the nontrivial solutions must oscillate
(i.e., change sign infinitely many times as $t\to\infty$),
but the oscillations are damped and vanish as $t\to\infty$.
Finally, for $\lambda\tau > \pi/2$ the nontrivial solutions oscillate with unbounded amplitude
as $t\to\infty$.
We refer to Chapter 2 of \cite{Smith} and \cite{Gyori-Ladas} for details.

The traditional tool in study of asymptotic behavior of Cucker-Smale type model
is the quadratic velocity fluctuation. In our present setting with two agents only,
the quadratic velocity fluctuation is simply $y(t):=u(t)^2/2$.
Taking the time derivative, we readily have
\(  \label{doty}
   \dot y = -\lambda u \widetilde u,
\)
and, obviously, we only have a chance to prove that $y=y(t)$ is
decaying for all $t>0$ if the solution $u=u(t)$ does not oscillate,
which requires $\lambda\tau < e^{-1}$ (and eventually an appropriate
assumption on the initial datum in case it was not constant).
The core of the method developed in this paper is to prove that,
under these conditions, the quadratic fluctuation $y=y(t)$
is indeed (exponentially) decaying to zero as $t\to\infty$.
The method consists of two steps.

\textbf{Step 1.}
We start by applying the Cauchy-Schwartz inequality  in \eqref{doty}, with some $\eps>0$
to be specified later,
\(  \label{eps-est}
   | \dot y| \leq \frac{\lambda\eps}{2} |u|^2 + \frac{\lambda}{2\eps} |\widetilde u|^2
      = \lambda\eps y + \frac{\lambda}{\eps} \widetilde y.
\)
Let us observe that, since the initial datum $u^0$ is nonzero and constant, we have
\[
   -\frac{\dot y(0+)}{y(0)} = 2\lambda \frac{u(0) u(-\tau)}{u(0)^2} = 2\lambda,
\]
where $\dot y(0+)$ is the right-sided derivative of $y$ at zero
(observe that $\dot y(0)$ does not exist).
Let us now choose some $\mu>2$, then by continuity
of $\dot y/y$ we have
\[   
   -\frac{\dot y(t)}{y(t)} < \mu\lambda \qquad\mbox{for } t\in (0,T)
\]
for some $T>0$. Integration on the interval $(t-s,t) \subseteq (0,T)$ gives
\(   \label{flow_est2}
   y(t-s) \leq e^{\mu\lambda s} y(t),
\)
and due to the constant initial datum, we can extend the validity of \eqref{flow_est2}
for all $-\tau \leq t-s < t < T$.
We shall now argue that if $\lambda\tau < e^{-1}$ then
there exists $\mu>2$ such that \eqref{flow_est2}
holds globally, i.e., $T=+\infty$.

For contradiction, let us assume that $T<+\infty$.
Then, by continuity, we have
\(  \label{contT}
   -\frac{\dot y(T)}{y(T)} = \mu\lambda.
\)
However, using \eqref{flow_est2} with $t:=T$, $s:=\tau$ in \eqref{eps-est} gives
\[
   | \dot y(T)| \leq \lambda\eps y(T) + \frac{\lambda}{\eps} y(T-\tau) \leq
      \lambda y(T) \left( \eps + \frac{1}{\eps} e^{\mu\lambda\tau} \right).
\]
Minimizing the right-hand side with respect to $\eps>0$ gives
\[
   | \dot y(T)| \leq 2 \lambda e^\frac{\mu\lambda\tau}{2} y(T).
\]
A simple calculation reveals that if (and only if) $\lambda\tau < e^{-1}$, there exists $\mu >2$
such that $2e^\frac{\mu\lambda\tau}{2} < \mu$.
In particular, the choice $\mu:=2e$ works for all $\lambda\tau < e^{-1}$.
But then
\[
   | \dot y(T)| \leq 2 \lambda e^\frac{\mu\lambda\tau}{2} y(T) < \mu\lambda y(T),
\]
which is a contradiction to \eqref{contT} and we conclude that $T=+\infty$.
Consequently, we have the estimate
\(   \label{flow_est3}
   y(t-s) \leq e^{2e \lambda s} y(t)\qquad\mbox {for all} -\tau < t-s < t < +\infty.
\)

\textbf{Step 2.}
We have for $t>0$
\(  \label{for difference}
   \dot y = - \lambda u\widetilde u = -\lambda (u-\widetilde u)\widetilde u - \lambda \widetilde u^2
     \leq \lambda |u-\widetilde u| |\widetilde u| - \lambda \widetilde u^2,
\)
and, restricting to $t>\tau$,
\(   \label{u-wtu}
   |u-\widetilde u| \leq \int_{t-\tau}^t |\dot u(s)| \d s \leq \lambda \int_{t-\tau}^t |u(s-\tau)| \d s = \lambda\sqrt{2} \int_{-\tau}^0 \sqrt{y(t-\tau+s)} \d s,
\)
where we used the definition $y = u^2/2$.
>From \eqref{flow_est2} we obtain $y(t-\tau+s) \leq e^{-2e\lambda s} y(t-\tau)$ for $s\in(-\tau,0)$, which gives
\[
   |u-\widetilde u| \leq \lambda\sqrt{2\widetilde y} \int_{0}^\tau e^{e\lambda s} \d s
      \leq  \lambda\tau \sqrt{2\widetilde y} e^{e\lambda\tau}.
\]
Therefore, for $t>\tau$,
\(   \label{almost there}
   \dot y \leq 2\lambda \left( \lambda\tau e^{e\lambda\tau} - 1 \right) \widetilde y. 
\)
Consequently, for $\lambda\tau < e^{-1}$ we readily have $\dot y \leq 0$ for all $t>\tau$,
so that $y=y(t)$ has a nonnegative limit as $t\to \infty$.
If this limit was strictly positive, then the limit of the right-hand side in \eqref{almost there}
would be strictly negative, which would imply $\limsup_{t\to\infty} \dot y(t) < 0$, a contradiction.
We conclude that, if $\lambda\tau < e^{-1}$,
\[
   \lim_{t\to\infty} y(t) = 0.
\]

Let us remark that at the cost of reducing the range of admissible values of $\lambda\tau$
we are able to prove the exponential decay of $y=y(t)$ to zero.
We start by writing
\[
   \dot y = - \lambda u\widetilde u = -\lambda u(\widetilde u - u) - \lambda u^2
     \leq \lambda |u-\widetilde u| |u| - \lambda u^2,
\]
for $t>0$; notice the difference to \eqref{for difference}.
We again combine \eqref{u-wtu} with \eqref{flow_est2}, but this time we take $y(t-\tau+s) \leq e^{2e\lambda (\tau-s)} y(t)$ for $s\in(-\tau,0)$, which gives
\[
   |u-\widetilde u| \leq \lambda\sqrt{2y} \int_{\tau}^{2\tau} e^{e\lambda s} \d s
      \leq  \lambda\tau \sqrt{2 y} e^{2e\lambda\tau}.
\]
Consequently, for $t>\tau$,
\[
   \dot y \leq 2\lambda \left(\lambda\tau e^{2e\lambda\tau} - 1 \right) y .
\]
This implies exponential convergence of $y=y(t)$ to zero if $0 \leq \lambda\tau < z^\ast$
where $z^\ast \simeq 0.252$ is the real root of the transcendental equation $z e^{2ez} - 1 = 0$.
Clearly, $0 < z^\ast < e^{-1}$.

To summarize, we proved that if $\lambda\tau < e^{-1}$,
the solution $u=u(t)$ of \eqref{feedback} subject to the constant initial datum
\eqref{feedback_IC} converges monotonically (i.e., non-oscillatory) towards zero as $t\to\infty$.
Let us point out that this result is sharp since we know that if $\lambda\tau > e^{-1}$,
oscillations appear.
Moreover, if $\lambda\tau$ is smaller than approx. $0.252$, the convergence
is exponential.
The goal of this paper is to extend the above method to the full Cucker-Smale system
\eqref{CS_delay1}--\eqref{CS_delay2}.
The core idea is to derive an analogue of the forward-backward estimate \eqref{flow_est3},
however, due to the coupling of the velocity and position variables in the full Cucker-Smale model,
the forward-backward estimate has to be applied to the quantity $D=D(t)$ given by \eqref{def:D}
instead of the quadratic velocity fluctuation.

\section{Asymptotic flocking for the delay Cucker-Smale model}\label{sec:flocking}

The proof of asymptotic flocking for the system \eqref{CS_delay1}--\eqref{CS_delay2}
will be carried out in three steps: First, in Section \ref{subsec:VflucB}
we shall derive an uniform bound on the quadratic velocity fluctuation
$V=V(t)$ by constructing a suitable Lyapunov functional.
Then, in Section \ref{FbEst} we prove a forward-backward estimate
on the quantity $D=D(t)$ defined in \eqref{def:D}, which states
that $D=D(t)$ changes at most exponentially locally in time.
Finally, in Section \ref{subsec:EstV} we prove the asymptotic decay
of the quadratic velocity fluctuation and boundedness of the spatial
fluctuation and so conclude the proof of Theorem \ref{thm:main}.

We start by noting that unique solutions of the system \eqref{CS_delay1}--\eqref{CS_delay_IC}
are constructed by the method of steps, see, e.g., \cite{Smith}.
We also note that the symmetry of the particle interactions $\psi_{ij} = \psi_{ji}$
implies that the total momentum is conserved along the solutions of
\eqref{CS_delay2}, i.e.,
\(  \label{mom_cons}
   \sum_{i=1}^N v_i(t) = \sum_{i=1}^N v_i(0) \qquad\mbox{for all } t\geq 0.
\)

\subsection{Uniform bound on the velocity fluctuations}\label{subsec:VflucB}

We first derive an estimate on the dissipation of the quadratic velocity fluctuation
in terms of the quantity $D=D(t)$ defined in \eqref{def:D}.

\begin{lemma} \label{lem:dV}
For any $\delta>0$ we have,
along the solutions of \eqref{CS_delay1}--\eqref{CS_delay2},
\begin{align} \label{dVest}
   \tot{}{t} V(t) \leq 2(\delta - 1)\lambda \widetilde{D}(t)
      + \frac{2\tau \lambda^3}{\delta} \int_{t-\tau}^{t} \widetilde{D}(s) \,\d s
      \qquad\mbox{for all } t>\tau.
\end{align}
\end{lemma}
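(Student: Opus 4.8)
The plan is to differentiate $V$ along the flow, reduce $\tot{}{t}V$ to a clean expression involving $\widetilde D$ by symmetrization, and then absorb the delay mismatch into the two terms on the right-hand side of \eqref{dVest}. First I would differentiate \eqref{def:V}: using $\tot{}{t}|v_i-v_j|^2 = 2(v_i-v_j)\cdot(\dot v_i-\dot v_j)$ and the antisymmetry of the summand under $i\leftrightarrow j$, one obtains $\tot{}{t}V = 2\sum_{i,j}(v_i-v_j)\cdot\dot v_i$. Carrying out the inner sum over $j$ and using that $\sum_i\dot v_i=0$ (the time-derivative of momentum conservation \eqref{mom_cons}), this collapses to $2N\sum_i v_i\cdot\dot v_i$. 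Substituting \eqref{CS_delay2} and symmetrizing the resulting double sum over the pair of indices yields the delayed analogue of the classical identity $\dot V=-2\lambda D$, namely
\[
   \tot{}{t}V = -\lambda\sum_{i,j}\widetilde\psi_{ij}\,(v_i-v_j)\cdot(\widetilde v_i-\widetilde v_j).
\]
The obstruction to dissipativity is precisely the mismatch between the undelayed factor $v_i-v_j$ and the delayed weights and velocities.

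Next I would split $v_i-v_j = (\widetilde v_i-\widetilde v_j) + \Delta_{ij}$, where $\Delta_{ij} := (v_i-v_j)-(\widetilde v_i-\widetilde v_j)$ measures how far the velocity difference has drifted over one delay span. The aligned part contributes exactly $-\lambda\sum_{i,j}\widetilde\psi_{ij}|\widetilde v_i-\widetilde v_j|^2 = -2\lambda\widetilde D$ by \eqref{def:D}. The cross term $-\lambda\sum_{i,j}\widetilde\psi_{ij}\Delta_{ij}\cdot(\widetilde v_i-\widetilde v_j)$ I would bound by $\lambda\sum_{i,j}\widetilde\psi_{ij}|\Delta_{ij}||\widetilde v_i-\widetilde v_j|$ and then apply Young's inequality in the form $|\Delta_{ij}||\widetilde v_i-\widetilde v_j| \le \delta\,|\widetilde v_i-\widetilde v_j|^2 + \tfrac{1}{4\delta}|\Delta_{ij}|^2$, the constant being chosen so that the first piece becomes $2\lambda\delta\widetilde D$ and combines with $-2\lambda\widetilde D$ into exactly the target term $2(\delta-1)\lambda\widetilde D$, leaving a remainder $\tfrac{\lambda}{4\delta}\sum_{i,j}\widetilde\psi_{ij}|\Delta_{ij}|^2$.

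It then remains to dominate this remainder by the double-time-integral term. Writing $\Delta_{ij} = \int_{t-\tau}^t(\dot v_i(s)-\dot v_j(s))\,\d s$ — here the hypothesis $t>\tau$ is essential, since it guarantees $s>0$ on the whole interval so that \eqref{CS_delay2} governs $\dot v$ there — Jensen's inequality gives $|\Delta_{ij}|^2 \le \tau\int_{t-\tau}^t|\dot v_i(s)-\dot v_j(s)|^2\,\d s$. Using $\widetilde\psi_{ij}\le 1$ from \eqref{ass:psi0} and the identity $\sum_{i,j}|\dot v_i-\dot v_j|^2 = 2N\sum_i|\dot v_i|^2$ (again via $\sum_i\dot v_i=0$), together with a weighted Cauchy–Schwarz estimate on $\dot v_i(s)=\tfrac{\lambda}{N}\sum_l\widetilde\psi_{il}(\widetilde v_l-\widetilde v_i)$ that uses $\sum_l\widetilde\psi_{il}\le N$ to yield $\sum_i|\dot v_i(s)|^2 \le \tfrac{2\lambda^2}{N}\widetilde D(s)$, the remainder reduces to a constant multiple of $\tau\lambda^3\delta^{-1}\int_{t-\tau}^t\widetilde D(s)\,\d s$. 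Since $\widetilde D\ge 0$, any slack in the constant is harmless and the bound $\tfrac{2\tau\lambda^3}{\delta}\int_{t-\tau}^t\widetilde D(s)\,\d s$ of \eqref{dVest} follows.

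The main obstacle is precisely this last chain: one must control the pointwise dissipation rate $\sum_i|\dot v_i(s)|^2$ by the weighted fluctuation $\widetilde D(s)$ rather than by the unweighted quantity $V$, which forces one to retain a factor $\widetilde\psi_{il}$ in the Cauchy–Schwarz step rather than discarding it, and to keep careful track of the fact that the weights appearing in $\sum_{i,j}\widetilde\psi_{ij}|\Delta_{ij}|^2$ (evaluated at time $t$) differ from the weights buried inside $\widetilde D(s)$. The reduction to the symmetric expression for $\tot{}{t}V$ and the correct bookkeeping of the powers of $\lambda$, $\tau$ and $\delta$ are otherwise routine.
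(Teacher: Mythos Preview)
Your proof is correct and follows essentially the same approach as the paper: compute $\tot{}{t}V$, symmetrize, split off the delay mismatch, apply Young's inequality, and then bound the mismatch term via the integral representation of $v_i-\widetilde v_i$ together with Cauchy--Schwarz/Jensen and the estimate $\sum_i|\dot v_i|^2\le \tfrac{2\lambda^2}{N}\widetilde D$. The only cosmetic difference is that you retain the pair structure $\Delta_{ij}=(v_i-v_j)-(\widetilde v_i-\widetilde v_j)$ throughout, whereas the paper works with the single-index quantity $v_i-\widetilde v_i$; as you note, your organization in fact produces the sharper constant $\tfrac{\tau\lambda^3}{\delta}$ in the integral term, which you then relax to the stated $\tfrac{2\tau\lambda^3}{\delta}$.
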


\begin{proof}
We have
\[
   \tot{}{t} V(t)
   =
   \sum_{i=1}^N\sum_{j=1}^N \langle v_i-v_j, \dot v_i - \dot v_j \rangle
   =
   2N \sum_{i=1}^N \langle v_i, \dot v_i \rangle -
   \sum_{i=1}^N\sum_{j=1}^N \langle v_i, \dot v_j \rangle
   =
   2N \sum_{i=1}^N \langle v_i, \dot v_i \rangle,
\]
where the last equality is due to the conservation of momentum \eqref{mom_cons}.
With \eqref{CS_delay2} we have
\begin{align*}
   \nonumber
   \tot{}{t} V(t) &=
   2\lambda \sum_{i=1}^N \sum_{j=1}^N \langle v_{i}, \widetilde{\psi}_{ij}(\widetilde{v}_{j}-\widetilde{v}_{i})\rangle \\
   \nonumber
   &=
    2\lambda \sum_{i=1}^N \sum_{j=1}^N \langle
   \widetilde{v}_{i},\widetilde{\psi}_{ij}(\widetilde{v}_{j}-\widetilde{v}_{i})\rangle -
   2\lambda \sum_{i=1}^N \sum_{j=1}^N \langle
   \widetilde{v}_{i}-v_{i},\widetilde{\psi}_{ij}(\widetilde{v}_{j}-\widetilde{v}_{i})\rangle.
\end{align*}
For the first term of the right-hand side we apply the standard
symmetrization trick (exchange of summation indices $i
\leftrightarrow j$, noting the symmetry of $\widetilde{\psi}_{ij} =
\widetilde{\psi}_{ji}$),
\[
   2\lambda
   \sum_{i=1}^N \sum_{j=1}^N \langle
   \widetilde{v}_{i},\widetilde{\psi}_{ij}(\widetilde{v}_{j}-\widetilde{v}_{i})\rangle = -
   \lambda \sum_{i=1}^N \sum_{j=1}^N \widetilde{\psi}_{ij} |\widetilde{v}_{j} -\widetilde{v}_{i}|^{2}.
\]
Therefore, we arrive at
\begin{align*}
   \tot{}{t} V(t) = -
   \lambda \sum_{i=1}^N \sum_{j=1}^N \widetilde{\psi}_{ij} |\widetilde{v}_{j} -\widetilde{v}_{i}|^{2} -
   2\lambda \sum_{i=1}^N \sum_{j=1}^N \langle \widetilde{v}_{i}-
   v_{i},\widetilde{\psi}_{ij}(\widetilde{v}_{j}-\widetilde{v}_{i})\rangle.
\end{align*}
For the last term we use the Young inequality with $\delta>0$ and assumption \eqref{ass:psi0},
\begin{equation*}
   \left|
      2\lambda \sum_{i=1}^N \sum_{j=1}^N \langle \widetilde{v}_{i}- v_{i},\widetilde{\psi}_{ij}(\widetilde{v}_{j}-\widetilde{v}_{i})\rangle
   \right|
     \leq
   \lambda\delta \sum_{i=1}^N \sum_{j=1}^N \widetilde{\psi}_{ij} |\widetilde{v}_{j} -\widetilde{v}_{i}|^{2} +
   \frac{N\lambda}{\delta} \sum_{i=1}^N  |\widetilde{v}_{i}-v_{i}|^{2}.
\end{equation*}
Hence,
\begin{align}
   \nonumber
   \tot{}{t}V(t) &\leq
   (\delta-1)\lambda
      \sum_{i=1}^N \sum_{j=1}^N \widetilde{\psi}_{ij} |\widetilde{v}_{j} -\widetilde{v}_{i}|^{2} +
    \frac{N\lambda}{\delta} \sum_{i=1}^N |\widetilde{v}_{i}-v_{i}|^{2} \\
    &=
     2 (\delta-1) \lambda \widetilde{D}(t) +
     \frac{N \lambda}{\delta} \sum_{i=1}^N |\widetilde{v}_{i}-v_{i}|^{2}.
    \label{ke_diss}
\end{align}
Next, for $t>\tau$ we use \eqref{CS_delay2} to evaluate the difference
$v_{i}-\widetilde{v}_{i}$,
\begin{equation} \label{vel1_del}
   v_{i}-\widetilde{v}_{i} = \int_{t-\tau}^{t} \tot{}{s} v_{i}(s)\, \d s
    = \frac{\lambda}{N}\sum_{j=1}^N \int_{t-\tau}^{t}
    \widetilde{\psi}_{ij}(s)(\widetilde{v}_{j}(s)-\widetilde{v}_{i}(s))\, \d s.
\end{equation}
Taking the square in \eqref{vel1_del}
and summing over $i$ we have
\begin{align} \label{vel2_del}
   \nonumber
   \sum\limits_{i=1}^N |\widetilde{v}_{i}-v_{i}|^{2}
   &=\frac{\lambda^2}{N^2} \sum\limits_{i=1}^N \left|
   \sum\limits_{j=1}^N \int_{t-\tau}^{t}
   \widetilde{\psi}_{ij}(s)(\widetilde{v}_{j}(s)-\widetilde{v}_{i}(s)) \,\d s \right|^2
   \\ \nonumber
   &\leq \frac{\lambda^2}{N}\sum_{i=1}^N\sum_{j=1}^N \left|
   \int_{t-\tau}^{t} \widetilde{\psi}_{ij}(s)(\widetilde{v}_{j}(s)-\widetilde{v}_{i}(s)) \,\d s \right|^{2}
   \\
   &\leq \frac{\tau \lambda^2}{N}\sum_{i=1}^N\sum_{j=1}^N  \int_{t-\tau}^{t}
   \widetilde{\psi}_{ij}(s) |\widetilde{v}_{j}(s)-\widetilde{v}_{i}(s)|^{2} \,\d s
   = \frac{2\tau \lambda^2}{N} \int_{t-\tau}^{t}\widetilde{D}(s)\,\d s.
\end{align}
The first inequality in \eqref{vel2_del} is
Cauchy-Schwartz for the sum term, i.e. $\left|\sum\limits_{i=1}^N
a_{i}\right|^2 \leq N\sum\limits_{i=1}^N |a_{i}|^2$, and the second
Cauchy-Schwartz inequality for the integral term,
together with the bound $\psi\leq 1$.
Combining \eqref{ke_diss} and \eqref{vel2_del}
directly leads to \eqref{dVest}.
\end{proof}

We now define for $t>\tau$ the functional
\begin{equation} \label{Lyap}
   \L(t) := V(t) + 4\tau\lambda^3 \int_{t-\tau}^{t}\int_{\theta}^{t}  \widetilde{D}(s) \, \d s \, \d\theta,
\end{equation}
where $V=V(t)$ is the quadratic velocity fluctuation \eqref{def:V}.

\begin{lemma} \label{lem:Lyap}
Let the parameters $\lambda, \tau >0$ satisfy
\[ 
  \lambda\tau \leq \frac{1}{2}.
\]
Then along the solutions of \eqref{CS_delay1}--\eqref{CS_delay2}
the functional \eqref{Lyap} satisfies
\[
   \L(t)\leq \L(\tau)\qquad\mbox{for all } t\geq \tau.
\]
\end{lemma}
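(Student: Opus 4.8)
The plan is to prove that $\L$ is non-increasing on $(\tau,\infty)$ by showing $\tot{}{t}\L(t)\le 0$, after which the asserted inequality $\L(t)\le\L(\tau)$ for $t\ge\tau$ follows immediately by integration. Since Lemma~\ref{lem:dV} already controls $\tot{}{t}V(t)$ via \eqref{dVest}, the only new ingredient is the time-derivative of the double-integral correction term in \eqref{Lyap}. The whole point of that correction term is that it is engineered to absorb, and then cancel, the ``bad'' running-integral term $\frac{2\tau\lambda^3}{\delta}\int_{t-\tau}^t\widetilde{D}(s)\,\d s$ appearing on the right-hand side of \eqref{dVest}.

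First I would differentiate $I(t):=\int_{t-\tau}^{t}\int_{\theta}^{t}\widetilde{D}(s)\,\d s\,\d\theta$ by the Leibniz rule applied to the nested moving limits. The contribution of the outer upper limit $\theta=t$ vanishes because $\int_t^t\widetilde{D}=0$; the outer lower limit $\theta=t-\tau$ contributes $-\int_{t-\tau}^t\widetilde{D}(s)\,\d s$; and differentiating the inner upper limit inside the outer integral contributes $\int_{t-\tau}^t\widetilde{D}(t)\,\d\theta=\tau\widetilde{D}(t)$. Hence
\begin{equation*}
   \tot{}{t}I(t)=\tau\widetilde{D}(t)-\int_{t-\tau}^{t}\widetilde{D}(s)\,\d s,
\end{equation*}
so the correction term contributes exactly $4\tau^2\lambda^3\widetilde{D}(t)-4\tau\lambda^3\int_{t-\tau}^{t}\widetilde{D}(s)\,\d s$ to $\tot{}{t}\L(t)$.

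Next I would add this to the bound \eqref{dVest} and collect the result into a coefficient multiplying $\widetilde{D}(t)$ plus a coefficient multiplying $\int_{t-\tau}^{t}\widetilde{D}(s)\,\d s$. Since $\widetilde{D}\ge0$ and the running integral is nonnegative, it suffices to pick the free parameter $\delta>0$ making both coefficients nonpositive. The integral coefficient $\frac{2\tau\lambda^3}{\delta}-4\tau\lambda^3$ is $\le0$ iff $\delta\ge\frac12$, while the pointwise coefficient $2(\delta-1)\lambda+4\tau^2\lambda^3$ is $\le0$ iff $\delta\le 1-2(\lambda\tau)^2$. These two constraints are compatible precisely when $\frac12\le 1-2(\lambda\tau)^2$, i.e.\ when $\lambda\tau\le\frac12$ — exactly the hypothesis of the Lemma. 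Concretely, the choice $\delta=\frac12$ makes the integral coefficient vanish and renders the pointwise coefficient $\lambda(4(\lambda\tau)^2-1)\le0$ throughout the admissible range, yielding $\tot{}{t}\L(t)\le0$ for all $t>\tau$; integrating from $\tau$ to $t$ concludes.

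I expect the only genuine subtlety — the main obstacle — to be the careful bookkeeping in the Leibniz differentiation of the doubly-nested, doubly-moving integral $I(t)$, where it is easy to drop a boundary term or misplace the sign of the lower-limit contribution. The entire mechanism hinges on the exact matching between the term $-4\tau\lambda^3\int_{t-\tau}^{t}\widetilde{D}$ produced here and the term $+\frac{2\tau\lambda^3}{\delta}\int_{t-\tau}^{t}\widetilde{D}$ coming from \eqref{dVest}. Once that derivative is verified, the remaining feasibility analysis for $\delta$ is elementary algebra, and the threshold $\lambda\tau=\frac12$ is exactly the borderline at which $\delta=\frac12$ drives both coefficients to their critical values.
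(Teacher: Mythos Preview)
Your proposal is correct and follows essentially the same approach as the paper: compute the Leibniz derivative of the double integral, combine with \eqref{dVest}, choose $\delta=\tfrac12$ so that the running-integral term cancels, and observe that the remaining coefficient $\lambda(4(\lambda\tau)^2-1)$ of $\widetilde D(t)$ is nonpositive for $\lambda\tau\le\tfrac12$. The paper is terser---it simply states the derivative of the double integral and picks $\delta=\tfrac12$ without the feasibility analysis---but the argument is identical.
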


\begin{proof}
The time derivative of the second term in $\L(t)$ yields
\begin{align*} 
   \tot{}{t}\int_{t-\tau}^{t}\int_{\theta}^{t}
   \widetilde{D}(s) \, \d s \, \d\theta =\tau \widetilde{D}(t)  -
   \int_{t-\tau}^{t} \widetilde{D}(s) \, \d s.
\end{align*}
Combining this with \eqref{dVest} (with the  choice $\delta=1/2$ so that we get rid of the integral term)
 we get
\begin{align*}
   \tot{}{t} \L(t) \leq \left( -1 + 4{\tau^2 \lambda^2}\right) \lambda \widetilde{D}(t).
\end{align*}
We observe that the right-hand side is nonpositive for $\lambda\tau \leq 1/2$ and conclude.
\end{proof}

Consequently, for $\lambda\tau \leq 1/2$ the velocity fluctuation $V=V(t) \leq \L(t)$ is uniformly
bounded from above by $\L(\tau)$ for all $t\geq \tau$.
To cover the interval $[0,\tau]$, where Lemma \ref{lem:Lyap} does not apply,
and establish a connection to the initial datum,
we derive the following estimate on $V=V(t)$ which holds for all $t>0$.

\begin{lemma}\label{lem:estV1}
Along the solutions $(x(t), v(t))$ of the system \eqref{CS_delay1}--\eqref{CS_delay2}
the quadratic fluctuation $V=V(t)$ satisfies
\( \label{estV1}
      \tot{}{t} V(t)  \leq  2\lambda \left(V(t) + \widetilde V(t) \right) \qquad\mbox{for all } t>0.
\)
\end{lemma}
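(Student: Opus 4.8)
The plan is to differentiate $V$ exactly as in the opening lines of the proof of Lemma \ref{lem:dV}, but then to estimate the result by a single crude Young inequality, so that the delayed terms are never rewritten through \eqref{vel1_del}. It is precisely this absence of the integral representation of $v_i-\widetilde v_i$ that frees the estimate from the restriction $t>\tau$ and makes it valid on the whole interval $t>0$, including the initial layer $[0,\tau]$ not covered by Lemma \ref{lem:dV}.

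First I would use the conservation of momentum \eqref{mom_cons} to write $\tot{}{t}V(t)=2N\sum_{i=1}^N\langle v_i,\dot v_i\rangle$ and insert \eqref{CS_delay2}, obtaining
\begin{equation*}
   \tot{}{t}V(t)=2\lambda\sum_{i=1}^N\sum_{j=1}^N\widetilde{\psi}_{ij}\langle v_i,\widetilde v_j-\widetilde v_i\rangle .
\end{equation*}
I would then symmetrize by exchanging the summation indices $i\leftrightarrow j$ and using $\widetilde{\psi}_{ij}=\widetilde{\psi}_{ji}$; averaging the two representations yields the symmetric form
\begin{equation*}
   \tot{}{t}V(t)=-\lambda\sum_{i=1}^N\sum_{j=1}^N\widetilde{\psi}_{ij}\langle v_i-v_j,\widetilde v_i-\widetilde v_j\rangle .
\end{equation*}

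Next I would bound the bilinear term from above by Young's inequality, $-\langle v_i-v_j,\widetilde v_i-\widetilde v_j\rangle\leq\tfrac12|v_i-v_j|^2+\tfrac12|\widetilde v_i-\widetilde v_j|^2$, and discard the weights using $\widetilde{\psi}_{ij}\leq1$ from \eqref{ass:psi0}. Recalling the definition \eqref{def:V}, so that $\sum_{i,j}|v_i-v_j|^2=2V(t)$ and $\sum_{i,j}|\widetilde v_i-\widetilde v_j|^2=2\widetilde V(t)$, this gives $\tot{}{t}V(t)\leq\lambda\bigl(V(t)+\widetilde V(t)\bigr)$, which is in fact sharper than \eqref{estV1}; the claimed bound with constant $2\lambda$ then follows a fortiori.

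I do not expect a genuine analytic obstacle: the heart of the argument is a one-line Young estimate. The only point deserving care is to justify that the identity $\tot{}{t}V=2N\sum_i\langle v_i,\dot v_i\rangle$ and the ensuing manipulations are legitimate for every $t>0$ rather than only for $t>\tau$. This is fine because on $(0,\tau)$ the delayed velocities $\widetilde v_i(t)=v_i^0(t-\tau)$ come from the $C^1$ initial datum, so $V$ is continuously differentiable there, while on $(\tau,\infty)$ the required smoothness follows from the method of steps; the isolated corners at $t=k\tau$ are harmless since \eqref{estV1} is understood in the one-sided sense. Crucially, no step converts $v_i-\widetilde v_i$ into an integral of the equations of motion, which is exactly why the lower bound $t>\tau$ needed in Lemma \ref{lem:dV} can be dropped here.
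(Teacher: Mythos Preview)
Your proof is correct and follows essentially the same route as the paper: differentiate $V$, insert \eqref{CS_delay2}, symmetrize, and finish with a Young/Cauchy--Schwarz estimate together with $\widetilde\psi_{ij}\leq 1$. The one minor difference is that you push the symmetrization all the way to the clean bilinear form $-\lambda\sum_{i,j}\widetilde\psi_{ij}\langle v_i-v_j,\widetilde v_i-\widetilde v_j\rangle$ before estimating, whereas the paper applies Young termwise to the triple sum $\frac{2\lambda}{N}\sum_{i,j,k}\langle v_i-v_j,\widetilde\psi_{ik}(\widetilde v_k-\widetilde v_i)\rangle$; this accounts for the extra factor $2$ in the paper's constant and explains why your bound $\lambda(V+\widetilde V)$ is genuinely sharper than \eqref{estV1}.
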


\begin{proof}
With \eqref{CS_delay2} we have, for $t>0$,
\[
   \tot{}{t} V(t) =
   \frac{1}{2} \tot{}{t} \sum_{i=1}^N\sum_{j=1}^N |v_i-v_j|^2 =
      \frac{\lambda}{N} \sum_{i=1}^N\sum_{j=1}^N \left\langle
         v_i-v_j, \sum_{k=1}^N \wpsi_{ik} (\wv_k-\wv_i) - \wpsi_{jk} (\wv_k-\wv_j)
         \right\rangle.
\]
Exchange of the indices $i$ and $j$ gives
\[
   \sum_{i=1}^N \sum_{j=1}^N  \left\langle
         v_i-v_j, \wpsi_{jk} (\wv_k-\wv_j)  \right\rangle
         = - \sum_{i=1}^N \sum_{j=1}^N  \left\langle
         v_i-v_j, \wpsi_{ik} (\wv_k-\wv_i)  \right\rangle,
\]
so that, by the Cauchy-Schwartz inequality and the fact that $\psi\leq 1$,
\[
   \tot{}{t} V(t) &=&
      \frac{2\lambda}{N} \sum_{i=1}^N\sum_{j=1}^N \sum_{k=1}^N \left\langle
         v_i-v_j, \wpsi_{ik} (\wv_k-\wv_i)  \right\rangle \\
   &\leq&
      \frac{\lambda}{N} \sum_{i=1}^N\sum_{j=1}^N \left(
      |v_i-v_j|^2 + \wpsi_{ij}^2 |\wv_i-\wv_j|^2  \right) \\
    &\leq&
      2\lambda \left( V(t) + \widetilde{V}(t) \right).
\]
\end{proof}

Integrating \eqref{estV1} on $(0,\tau)$ we readily have
\[
   V(\tau) \leq (2\lambda\tau + 1) e^{2\lambda\tau} \max_{s\in [-\tau,0]} V(s).
\]
Therefore, the value of $\L(\tau)$ is bounded in terms of the initial datum as
\(   \label{Lbound}
      \L(\tau) \leq  L^0,
\)
with $L^0$ defined in \eqref{def:L0}.

\subsection{Forward-backward estimates}\label{FbEst}

\begin{lemma} \label{lem:D_ineq}
Let the communication rate $\psi=\psi(r)$ satisfy assumption \eqref{ass:psi2}.
Then along the solutions of \eqref{CS_delay1}--\eqref{CS_delay2}, the
quantity $D(t)$ defined by \eqref{def:D} satisfies for any fixed $\eps>0$ the
inequality
\begin{equation} \label{D_ineq}
   |\dot{D}(t)| \leq \left( 2\eps + \frac{\alpha\sqrt{2L^0}}{2} \right) D(t) + \frac{2\lambda^2}{\eps} \widetilde{D}(t) \qquad\mbox{for all } t>0,
\end{equation}
with $\alpha>0$ given in \eqref{ass:psi2}.
\end{lemma}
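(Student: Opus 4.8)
The plan is to differentiate $D$ directly and split the result according to where the time derivative lands. Writing $D(t)=\frac12\sum_{i,j}\psi_{ij}|v_i-v_j|^2$ and recalling that the solution is piecewise $C^1$ (it is built by the method of steps, so $\dot D$ exists away from the breakpoints $t\in\tau\mathbb{N}$), we have $\dot D = A + B$ with
\(
  A := \tfrac12\sum_{i,j}\dot\psi_{ij}|v_i-v_j|^2,
  \qquad
  B := \sum_{i,j}\psi_{ij}\langle v_i-v_j,\ \dot v_i-\dot v_j\rangle. \nonumber
\)
Here $A$ collects the contributions from differentiating the communication weights and $B$ those from differentiating the velocity differences. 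I expect $A$ to produce the first ($D$-)term of \eqref{D_ineq} carrying $\alpha$ and $L^0$, while $B$ produces both the $2\eps D$ term and the delayed $\widetilde D$ term.

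For the weight term $A$, I would first compute $\dot\psi_{ij}=\psi'(|x_i-x_j|)\,\langle x_i-x_j,\,v_i-v_j\rangle/|x_i-x_j|$ and estimate $|\dot\psi_{ij}|\le|\psi'(|x_i-x_j|)|\,|v_i-v_j|$ by Cauchy--Schwarz. This is where assumption \eqref{ass:psi2} enters decisively: since $\psi>0$ is nonincreasing, \eqref{ass:psi2} is equivalent to $|\psi'|\le\alpha\psi$, so $|\dot\psi_{ij}|\le\alpha\psi_{ij}|v_i-v_j|$ and the weight in $A$ is converted into the same weight $\psi_{ij}$ carried by $D$. One is then left with the cubic expression $|A|\le\frac\alpha2\sum_{i,j}\psi_{ij}|v_i-v_j|^3$. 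I would bound $\max_{i,j}|v_i-v_j|\le\sqrt{2V(t)}$ and invoke the uniform-in-time velocity bound $V(t)\le L^0$ established in Section \ref{subsec:VflucB}, which accounts for the factor $\sqrt{2L^0}$; the remaining sum $\frac12\sum_{i,j}\psi_{ij}|v_i-v_j|^2=D(t)$ then yields the $D$-contribution appearing in \eqref{D_ineq}.

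For the velocity term $B$, I would substitute \eqref{CS_delay2} into $\dot v_i-\dot v_j$ and apply Young's inequality with the free parameter $\eps>0$, splitting $B$ into a present-time part bounded by $\eps\sum_{i,j}\psi_{ij}|v_i-v_j|^2=2\eps D(t)$ and a part $\frac1{4\eps}\sum_{i,j}\psi_{ij}|\dot v_i-\dot v_j|^2$. Using $\psi\le1$ together with $|\dot v_i-\dot v_j|^2\le 2|\dot v_i|^2+2|\dot v_j|^2$, the latter reduces to estimating $\sum_i|\dot v_i|^2$; applying Cauchy--Schwarz in the interaction sum and the bound $\wpsi_{ik}^2\le\wpsi_{ik}$ (again $\psi\le1$) gives $\sum_i|\dot v_i|^2\le\frac{2\lambda^2}{N}\widetilde D(t)$, and collecting the constants produces exactly the delayed term $\frac{2\lambda^2}{\eps}\widetilde D(t)$. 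Adding the estimates for $A$ and $B$ then delivers \eqref{D_ineq}.

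The main obstacle is the weight term $A$. Unlike the dissipation computations of the preceding lemmas, $A$ carries no favourable sign, is weighted by $\psi'$ rather than by $\psi$, and is genuinely cubic in the velocity differences. The two devices that render it tractable are precisely assumption \eqref{ass:psi2}, which trades $\psi'$ for $\alpha\psi$ so that the weight matches that of $D$, and the a priori global velocity bound $V(t)\le L^0$ from Section \ref{subsec:VflucB}, which absorbs the surplus power of $|v_i-v_j|$ and is the reason $L^0$ (and, implicitly, the smallness condition $\lambda\tau\le\frac12$ underlying that bound) enters the statement. A secondary point demanding care is the bookkeeping of the Cauchy--Schwarz and Young constants in $B$, so that the coefficients of $D$ and $\widetilde D$ emerge in the form stated in \eqref{D_ineq}.
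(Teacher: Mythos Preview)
Your proposal is correct and follows essentially the same route as the paper's proof: the same decomposition $\dot D=A+B$, the same use of \eqref{ass:psi2} together with the a~priori bound $|v_i-v_j|\le\sqrt{2L^0}$ to control $A$, and the same Cauchy--Schwarz estimate $\sum_i|\dot v_i|^2\le\frac{2\lambda^2}{N}\widetilde D$ to control $B$. The only cosmetic difference is that for $B$ the paper first symmetrizes to $2\sum_{i,j}\psi_{ij}\langle v_i-v_j,\dot v_i\rangle$ and then applies Young, whereas you apply Young directly and use $|\dot v_i-\dot v_j|^2\le 2|\dot v_i|^2+2|\dot v_j|^2$; both paths land on exactly the same constants.
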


\begin{proof}
For $t>0$ we differentiate \eqref{def:D} in time,
\begin{align}
  \nonumber
   \tot{}{t} {D}(t) &= \frac{1}{2} \tot{}{t} \sum_{i=1}^N \sum_{j=1}^N \psi_{ij} |v_{i}-v_{j}|^{2} \\
     &= \frac{1}{2} \sum_{i=1}^N \sum_{j=1}^N \psi_{ij}' \left\langle
        \frac{x_{i}-x_{j}}{|x_{i}-x_{j}|},v_{i}-v_{j}\right\rangle |v_{i}-v_{j}|^{2}
        + \sum_{i=1}^N \sum_{j=1}^N \psi_{ij}\langle v_{i}-v_{j},\dot{v}_{i}-\dot{v}_{j}\rangle,
         \label{dD(t)}
\end{align}
where $\psi_{ij}'=\psi'(|x_{i}-x_{j}|)$.
By assumption \eqref{ass:psi2}, $|\psi'(r)|\leq \alpha \psi(r)$ for $r \geq 0$,
we have for the first term of the right-hand side
\begin{align} \nonumber
   \frac{1}{2}\left| \sum_{i=1}^N \sum_{j=1}^N
   \psi_{ij}' \left\langle \frac{x_{i}-x_{j}}{|x_{i}-x_{j}|},v_{i}-v_{j} \right\rangle |v_{i}-v_{j}|^{2} \right|
   &\leq \frac{\alpha}{2} \sum_{i=1}^N \sum_{j=1}^N \psi_{ij}|v_{i}-v_{j}| |v_{i}-v_{j}|^2 \\
   &\leq \frac{\alpha\sqrt{2L^0}}{2}
   \sum_{i=1}^N \sum_{j=1}^N \psi_{ij}|v_{i}-v_{j}|^2
   =  \frac{\alpha\sqrt{2L^0}}{2} D(t),
\label{Est1}
\end{align}
where in the second inequality we used the bound
\[
   |v_i(t) - v_j(t)| \leq \sqrt{2V(t)} \leq \sqrt{2\L(\tau)} \leq \sqrt{2L^0}
\]
provided by Lemma \ref{lem:Lyap} and \eqref{Lbound}.

For the second term of the right-hand side of \eqref{dD(t)}
we apply the symmetrization trick,
\[
   \sum_{i=1}^N \sum_{j=1}^N \psi_{ij}\langle
      v_{i}-v_{j},\dot{v}_{i}-\dot{v}_{j}\rangle
    = 2 \sum_{i=1}^N \sum_{j=1}^N \psi_{ij}\langle
     v_{i}-v_{j},\dot{v}_{i} \rangle,
\]
and estimate using the Cauchy-Schwartz inequality with $\eps>0$,
\begin{align}
   \nonumber
   2 \left| \sum_{i=1}^N \sum_{j=1}^N \psi_{ij}\langle
     v_{i}-v_{j},\dot{v}_{i} \rangle \right|
   & \leq \eps \sum_{i=1}^N \sum_{j=1}^N \psi_{ij}|v_{i}-v_{j}|^2
      +\frac{N}{\eps} \sum_{i=1}^N |\dot{v}_{i}|^2 \\
    \nonumber
   &\leq {\eps} \sum_{i=1}^N \sum_{j=1}^N \psi_{ij}|v_{i}-v_{j}|^2
   + \frac{\lambda^2}{N \eps}\sum_{i=1}^N \left|
        \sum\limits_{j=1}^N \widetilde{\psi}_{ij} (\widetilde{v}_{j}-\widetilde{v}_{i})\right|^{2} \\
    \nonumber
   &\leq {\eps} \sum_{i=1}^N \sum_{j=1}^N \psi_{ij}|v_{i}-v_{j}|^2
     + \frac{\lambda^2}{\eps} \sum_{i=1}^N \sum_{j=1}^N \widetilde{\psi}_{ij}|\widetilde{v}_{j}-\widetilde{v}_{i}|^2 \\
   &\leq
   2 \eps D(t) + \frac{2 \lambda^2}{\eps} \widetilde{D}(t).
\label{Est2}
\end{align}
Combining the estimates \eqref{Est1}--\eqref{Est2} gives \eqref{D_ineq}.
\end{proof}

\begin{lemma} \label{lem:delayEst}
Let $y\in C([-\tau,\infty))$ be a nonnegative function,
differentiable on $(-\tau,0)$ and continuously differentiable on $(0,\infty)$,
satisfying for some constants $C_1 ,C_2, \tau > 0$ the differential inequality
\(   \label{ass:y1}
    |\dot{y}(t)| \leq C_1 y(t) + C_2 y(t-\tau) \qquad \text{for all } t > 0.
\)
Moreover, let
\(  \label{ass:y2}
   M := \max \left\{\sup_{s \in (-\tau,0)} \frac{|\dot{y}(s)|}{y(s)},
                 \frac{|\dot{y}(0^+)|}{y(0)}
              \right\} < \infty,
\)
where $\dot{y}(0^+)$ denotes the right-hand side derivative of $y(t)$ at $t=0$.

Then, if there exists some $\kappa>0$ such that
\(  \label{ass:y3}
    \kappa > \max \left\{ M , C_1+C_2 e^{\kappa \tau} \right\},
\)
then for all $s,t>0$ such that $-\tau < t-s$,
\(  \label{fb}
   e^{-\kappa s} y(t) < y(t-s) < e^{\kappa s} y(t).
 \)
\end{lemma}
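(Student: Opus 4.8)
The plan is to prove the two-sided estimate \eqref{fb} by a continuity-and-contradiction argument, exactly mirroring Step 1 of the simplified case in Section \ref{sec:simple}. The key quantity to control is the logarithmic derivative $\dot y/y$, and the idea is to show that it stays strictly inside $(-\kappa,\kappa)$ for all $t>0$. Define
\[
   T := \sup \left\{ t > 0 : \left| \frac{\dot y(s)}{y(s)} \right| < \kappa \text{ for all } s \in (0,t) \right\}.
\]
First I would check that $T>0$: by assumption \eqref{ass:y2} we have $|\dot y(0^+)/y(0)| \le M < \kappa$ (using \eqref{ass:y3}), and since $y$ is continuously differentiable on $(0,\infty)$ with $y>0$, the logarithmic derivative is continuous and remains below $\kappa$ in absolute value on some initial interval. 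Note also that $y$ stays strictly positive on this interval, since a bounded logarithmic derivative precludes $y$ from reaching zero in finite time.

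Next I would establish the estimate \eqref{fb} on the interval where the logarithmic derivative bound holds. For $0 < s$ with $0 < t-s < t < T$, integrating the bound $-\kappa < \dot y(r)/y(r) < \kappa$ over $r \in (t-s,t)$ and exponentiating gives $e^{-\kappa s} y(t) < y(t-s) < e^{\kappa s} y(t)$ directly. The remaining case is when $t-s \in (-\tau,0]$, i.e.\ the backward point lands in the initial interval; here I would use assumption \eqref{ass:y2}, which bounds $|\dot y/y|$ by $M < \kappa$ on $(-\tau,0)$ as well, so the same integration argument extends the inequality across $t=0$ and down to $t-s > -\tau$. This shows \eqref{fb} holds for all admissible $s,t$ with $t < T$.

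The crux is to show $T = +\infty$, and this is where assumption \eqref{ass:y3} enters decisively. Arguing by contradiction, suppose $T < +\infty$. By continuity of $\dot y/y$ at $t=T$, we would have $|\dot y(T)/y(T)| = \kappa$. But I can bound $|\dot y(T)|$ using the differential inequality \eqref{ass:y1} together with the forward-backward estimate just proved: applying \eqref{fb} with $t:=T$ and $s:=\tau$ gives $y(T-\tau) < e^{\kappa\tau} y(T)$, whence
\[
   |\dot y(T)| \le C_1 y(T) + C_2 y(T-\tau) \le \left( C_1 + C_2 e^{\kappa\tau} \right) y(T) < \kappa \, y(T),
\]
where the last strict inequality is precisely \eqref{ass:y3}. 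This contradicts $|\dot y(T)/y(T)| = \kappa$, forcing $T = +\infty$ and hence \eqref{fb} for all $t>0$.

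The main obstacle I anticipate is handling the backward points that fall into the initial interval $[-\tau,0]$ cleanly. The function $y$ is only differentiable (not necessarily $C^1$) on $(-\tau,0)$, and $\dot y$ may have a jump at $t=0$ — this is why assumption \eqref{ass:y2} is stated with a separate supremum over $(-\tau,0)$ and a one-sided derivative at $0$. I would need to be careful that the integration of the logarithmic derivative across $t=0$ is justified despite this possible discontinuity; since $y$ is continuous at $0$ and $\log y$ is absolutely continuous on each of $[-\tau,0]$ and $[0,t]$, splitting the integral at $0$ and bounding each piece by $\kappa$ times its length resolves this. A secondary point worth stating explicitly is strict positivity of $y$ throughout, needed to make $\dot y/y$ well defined; this follows from $y$ being continuous, nonnegative, and having a bounded logarithmic derivative wherever it is positive, so it cannot touch zero.
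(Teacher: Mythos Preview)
Your proposal is correct and follows essentially the same route as the paper: define the maximal time $T$ on which $|\dot y/y|<\kappa$, use \eqref{ass:y2}--\eqref{ass:y3} to get $T>0$, and derive a contradiction at a putative finite $T$ by combining \eqref{ass:y1} with the integrated bound $y(T-\tau)<e^{\kappa\tau}y(T)$. Your extra care about positivity of $y$ and about integrating $\dot y/y$ across the possible jump at $t=0$ are details the paper leaves implicit, but otherwise the arguments coincide.
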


\begin{proof}
Due to the assumed continuity of $y(t)$ and $\dot y(t)$
on $(0,\infty)$, \eqref{ass:y2}--\eqref{ass:y3} imply
that there exists $T>0$ such that
\(  \label{proof:y1}
  - \kappa < \frac{\dot{y}(t)}{y(t)} < \kappa \qquad\mbox{for } t\in(-\tau, T).
\)
We claim that \eqref{proof:y1} holds for all $t>-\tau$, i.e., $T=\infty$.
For contradiction, assume that $T<\infty$, then again by continuity we have
\( \label{proof:y2}
   |\dot{y}(T)| = \kappa y(T).
\)
Integrating \eqref{proof:y1} on the time interval $(T-\tau,T)$ yields
\[
   e^{-\kappa\tau} y(T) < y(T-\tau) < e^{\kappa\tau} y(T).
\]
Combining this with \eqref{ass:y1} and \eqref{ass:y3} gives
\[
    |\dot{y}(T)| \leq C_1 y(T) + C_2 y(T-\tau) < \left( C_1 + C_2 e^{\kappa\tau} \right) y(T) < \kappa y(T),
\]
which is a contradiction to \eqref{proof:y2}.
Consequently, \eqref{proof:y1} holds with $T:=\infty$,
and an integration on the interval $(t-s,t)$ implies \eqref{fb}.
\end{proof}

We now apply the result of Lemma \ref{lem:delayEst}
to derive a backward-forward estimate on the quantity
$D=D(t)$ defined in \eqref{def:D}.

\begin{lemma}\label{lem:estD3}
Let the initial datum $(x^0(s),v^0(s))\in C([-\tau,0], \R^{2d})$ be differentiable and such that
the quantity $D=D(t)$ defined in \eqref{def:D} satisfies
\(   \label{ass:D1}
   M := \max \left\{\sup_{s \in (-\tau,0)} \frac{|\dot{D}(s)|}{D(s)},
                 \frac{|\dot{D}(0^+)|}{D(0)}
              \right\} < \infty,
\)
where $\dot{D}(0^+)$ denotes the right-hand side derivative of $D(t)$ at $t=0$
along the solution of the system \eqref{CS_delay1}--\eqref{CS_delay2}.

Moreover, let $\mu>0$ be such that
\(   \label{ass:D2}
   \lambda \mu > \max \left\{ M , 4\lambda e^\frac{\mu\lambda \tau}{2} + \frac{\alpha\sqrt{2L^0}}{2} \right\}.
\)
Then for all $t>0$,
\(  \label{fbV}
   e^{-\mu\lambda\tau} D(t) < D(t-\tau) < e^{\mu\lambda\tau} D(t).
 \)
\end{lemma}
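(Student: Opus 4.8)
The plan is to recognize Lemma~\ref{lem:estD3} as a direct application of the abstract backward-forward estimate of Lemma~\ref{lem:delayEst} to the choice $y := D$, after optimizing the free parameter $\eps$ coming from the differential inequality of Lemma~\ref{lem:D_ineq}. The only genuine work beyond bookkeeping will be this $\eps$-optimization, which is designed to match the sharp expression appearing in \eqref{ass:D2}.

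First I would verify that $D=D(t)$ meets the regularity hypotheses of Lemma~\ref{lem:delayEst}. By its definition \eqref{def:D}, $D\geq 0$; it is continuous on $[-\tau,\infty)$ and differentiable on $(-\tau,0)$ because the initial datum is assumed differentiable and $\psi$ is differentiable; and it is continuously differentiable on $(0,\infty)$ since the method-of-steps solution $(x,v)$ is $C^1$ there, the only possible corner of $v$ sitting at $t=0$. This is precisely why the one-sided derivative $\dot D(0^+)$ enters the quantity $M$ in \eqref{ass:D1}, which is literally the quantity $M$ of hypothesis \eqref{ass:y2} (and whose finiteness forces $D(0)>0$ and $D>0$ on $(-\tau,0)$, keeping the ensuing ratios well defined).

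Next I would read off from Lemma~\ref{lem:D_ineq} that $D$ satisfies the differential inequality \eqref{ass:y1} with $C_1 = 2\eps + \frac{\alpha\sqrt{2L^0}}{2}$ and $C_2 = \frac{2\lambda^2}{\eps}$, for every $\eps>0$. Setting $\kappa := \mu\lambda$, the threshold condition \eqref{ass:y3} becomes $\mu\lambda > \max\bigl\{M,\, 2\eps + \frac{\alpha\sqrt{2L^0}}{2} + \frac{2\lambda^2}{\eps}e^{\mu\lambda\tau}\bigr\}$. Since $\eps>0$ is at my disposal, I would minimize $g(\eps) := 2\eps + \frac{2\lambda^2}{\eps}e^{\mu\lambda\tau}$ over $\eps>0$; the minimizer is $\eps^\ast = \lambda e^{\mu\lambda\tau/2}$, with minimal value $g(\eps^\ast) = 4\lambda e^{\mu\lambda\tau/2}$. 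With this optimal choice the threshold condition collapses to exactly the assumed inequality \eqref{ass:D2}, so \eqref{ass:D2} guarantees that $\kappa=\mu\lambda$ is an admissible value in \eqref{ass:y3}.

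Finally, Lemma~\ref{lem:delayEst} then yields the estimate \eqref{fb} with $\kappa=\mu\lambda$; evaluating it at $s:=\tau$ produces precisely \eqref{fbV} for all $t>0$, completing the proof. I expect the $\eps$-optimization to be the only substantive step, with the mild regularity check at $t=0$ being the one further point that requires a word of care.
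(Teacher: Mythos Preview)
Your proof is correct and follows exactly the same route as the paper: apply Lemma~\ref{lem:delayEst} to $y:=D$ with $\kappa:=\lambda\mu$, using the differential inequality from Lemma~\ref{lem:D_ineq} and the optimal choice $\eps=\lambda e^{\mu\lambda\tau/2}$. The paper simply states this value of $\eps$ without deriving it, whereas you carry out the minimization explicitly; otherwise the arguments are identical.
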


\begin{proof}
Choosing $\eps:= \lambda e^\frac{\mu\lambda\tau}{2}$ in formula \eqref{D_ineq} of Lemma \ref{lem:D_ineq} gives
\[
   |\dot D| \leq C_1 D + C_2 \widetilde{D} \qquad\mbox{for all } t>0
\]
with
\[
   C_1 := 2\lambda e^\frac{\mu\lambda\tau}{2} + \frac{\alpha\sqrt{2L^0}}{2}, \qquad
   C_2 := 2\lambda e^\frac{-\mu\lambda\tau}{2}.
\]
The statement follows then directly from Lemma \ref{lem:delayEst} with $y:=D$, $\kappa:=\lambda\mu$ and
noting that $C_1+C_2 e^{\kappa \tau} = 4\lambda e^\frac{\mu\lambda\tau}{2} + \frac{\alpha\sqrt{2L^0}}{2}$.
\end{proof}

\begin{remark}\label{rem:constantIC}
Let us note that the rather technical assumptions \eqref{ass:D1}, \eqref{ass:D2} of Lemma \ref{lem:estD3}
simplify greatly in the case that the initial datum $(x^0,v^0)$ 
is constant on $[-\tau, 0]$.
Then \eqref{ass:D1} reduces to
\[
   M := \frac{|\dot{D}(0^+)|}{D(0)} < \infty.
\]
Applying Lemma \ref{lem:D_ineq} with $\eps:=\lambda$,  noting that $D(0)=\widetilde{D}(0)\neq 0$, gives
\[
   \frac{|\dot{D}(0^+)|}{D(0)} \leq \left( 4\lambda + \frac{\alpha\sqrt{2L^0}}{2} \right).
\]
Consequently, \eqref{ass:D2} is verified when
\[
    \lambda \mu > 
       4\lambda e^\frac{\mu\lambda \tau}{2} + \frac{\alpha\sqrt{2L^0}}{2}.
\]
A simple calculation (checking for the positivity of the global maximum of the function
$f(x) = x - 4\lambda e^\frac{x \tau}{2} - \frac{\alpha\sqrt{2L^0}}{2}$)
reveals that the above condition is satisfiable if and only if
\(  \label{cond:simplified}
   \frac{\alpha\sqrt{2L^0}}{2\lambda} < \frac{2}{\lambda\tau} \left(\ln\frac{1}{2\lambda\tau} - 1\right),
\)
which implies the necessary condition $\lambda\tau < \frac{1}{2e}$.
The function $f(x)=\frac{1}{x}\ln\left(\frac{1}{2x}-1\right)$ is on the interval $(0,\frac{1}{2e})$
monotonically decreasing from $+\infty$ to zero,
while $L^0$ takes the form
\(    \label{L0const}
      L^0 = (2\lambda\tau + 1) e^{2\lambda\tau} V(0) + 2\lambda^3\tau^3 D(0).
\)
Consequently, for any $\lambda>0$ and any values of $V(0)$, $D(0)$ there exists $\tau_c>0$
such that \eqref{cond:simplified} is true for all $0 < \tau < \tau_c$.
\end{remark}

\subsection{Decay of the velocity fluctuations and flocking}\label{subsec:EstV}

In order to bound $D=D(t)$ from below by the quadratic velocity fluctuation $V=V(t)$,
we need to introduce the minimum interparticle interaction $\varphi = \varphi(t)$,
\(  \label{def:varphi}
   \varphi(t):=\min_{i,j=1,\cdots,N} \psi(|x_{i}(t)-x_{j}(t)|),
\)
and the position diameter
\(  \label{def:X}
   d_X(t):=\max_{i,j=1,\cdots,N} |x_i(t)-x_j(t)|.
\)
We then have the following estimate:

\begin{lemma} \label{lem:EstPhi}
Let the parameters $\tau, \lambda >0$ satisfy
\[
  \lambda\tau \leq \frac{1}{2}.
\]
Then along the solutions of \eqref{CS_delay1}--\eqref{CS_delay2}
we have
\(  \label{EstPhi}
   \varphi(t) \geq \psi\left( d_X(0) + \sqrt{2L^0}t \right) \qquad\mbox{for all } t>\tau.
\)
\end{lemma}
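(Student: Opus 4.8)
The plan is to reduce the statement to a linear-in-time upper bound on the position diameter $d_X(t)$ from \eqref{def:X}, after which the conclusion follows from the monotonicity of $\psi$. Since $\psi$ is nonincreasing, the minimum in the definition \eqref{def:varphi} of $\varphi$ is attained at the pair of agents realizing the maximal mutual distance, so that $\varphi(t)=\psi(d_X(t))$. Using once more that $\psi$ is nonincreasing, the claimed inequality \eqref{EstPhi} is therefore equivalent to
\[
   d_X(t) \leq d_X(0) + \sqrt{2L^0}\,t \qquad\mbox{for all } t>\tau.
\]

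To control $d_X(t)$ I would first establish the uniform velocity bound $|v_i(t)-v_j(t)| \leq \sqrt{2L^0}$ for all $t\geq 0$ and all $i,j$. Since $V(t)\geq \tfrac12|v_i-v_j|^2$ for any fixed pair, we have $|v_i-v_j|\leq\sqrt{2V(t)}$, and it thus suffices to show $V(t)\leq L^0$ for all $t\geq 0$. For $t\geq\tau$ this is precisely Lemma \ref{lem:Lyap} together with \eqref{Lbound}, which give $V(t)\leq \L(t)\leq \L(\tau)\leq L^0$. For the initial interval $t\in[-\tau,0]$ it is immediate from the definition \eqref{def:L0} of $L^0$, because $(2\lambda\tau+1)e^{2\lambda\tau}\geq 1$ and the double integral term is nonnegative. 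Once $V\leq L^0$ is known, I would integrate $\dot x_i = v_i$ from \eqref{CS_delay1}: for any pair $(i,j)$,
\[
   |x_i(t)-x_j(t)| \leq |x_i(0)-x_j(0)| + \int_0^t |v_i(s)-v_j(s)|\,\d s \leq d_X(0)+\sqrt{2L^0}\,t,
\]
and taking the maximum over all pairs yields the bound on $d_X(t)$, hence \eqref{EstPhi}. Note that working with the integral form of each pairwise distance avoids any difficulty arising from non-differentiability of the maximum $d_X$.

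The step requiring the most care is the uniform velocity bound on the intermediate interval $t\in[0,\tau]$, which Lemma \ref{lem:Lyap} does not cover. Here I would invoke Lemma \ref{lem:estV1}: the inequality \eqref{estV1} reads $\dot{V}(t) \leq 2\lambda(V(t)+\widetilde{V}(t))$, and on $(0,\tau)$ the delayed term satisfies $\widetilde{V}(t)=V(t-\tau)\leq \max_{s\in[-\tau,0]}V(s)$. A Gr\"onwall argument on $(0,t)$ then gives $V(t)\leq (2\lambda\tau+1)e^{2\lambda\tau}\max_{s\in[-\tau,0]}V(s)\leq L^0$, where one absorbs the constant using $1-e^{-2\lambda\tau}\leq 2\lambda\tau$; this is the same computation already carried out at $t=\tau$ to obtain \eqref{Lbound}. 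With $V\leq L^0$ secured on all of $[-\tau,\infty)$, the velocity bound and the integration above complete the proof, the hypothesis $\lambda\tau\leq\tfrac12$ entering only through Lemma \ref{lem:Lyap}.
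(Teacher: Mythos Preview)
Your argument is correct and follows essentially the same route as the paper: reduce \eqref{EstPhi} to the linear bound $d_X(t)\leq d_X(0)+\sqrt{2L^0}\,t$ via the uniform velocity estimate $|v_i-v_j|\leq\sqrt{2L^0}$, which in turn comes from $V(t)\leq L^0$ and Lemma~\ref{lem:Lyap} together with \eqref{Lbound}. The only noteworthy difference is that you are more careful than the paper on the interval $[0,\tau]$: the paper simply quotes $V(t)\leq\L(\tau)\leq L^0$ and integrates from $t=0$, whereas Lemma~\ref{lem:Lyap} literally applies only for $t\geq\tau$; your use of Lemma~\ref{lem:estV1} and the Gr\"onwall step to cover $t\in[0,\tau]$ closes this gap explicitly.
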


\begin{proof}
Since, by assumption, $\psi=\psi(r)$ is a nonincreasing function, we have
\(   \label{varphi_est}
   \varphi(t) = \min_{i,j=1,\cdots,N} \psi(|x_{i}(t)-x_{j}(t)|) = \psi(d_X(t)),
\)
with $d_X=d_X(t)$ defined in \eqref{def:X}.
Moreover, we have for all $i,j = 1,\cdots,N$,
\[
    \tot{}{t} |x_i - x_j|^2 \leq 2 |x_i-x_j| |v_i-v_j|,
\]
and Lemma \ref{lem:Lyap} together with \eqref{Lbound}
gives
\[
   |v_i(t) - v_j(t)|^2 \leq 2V(t) \leq 2\L(\tau) \leq 2L^0. 
\]
Consequently,
\[
   \tot{}{t} |x_i - x_j|^2 \leq 2\sqrt{2L^0} |x_i-x_j|,
\]
and integrating in time and taking the maximum over all $i,j = 1,\cdots,N$ yields
\[
   d_X(t) \leq d_X(0) + \sqrt{2L^0} t,
\]
which combined with \eqref{varphi_est} directly implies \eqref{EstPhi}.
\end{proof}

We are now in position to provide a proof of Theorem \ref{thm:main}.
\begin{proof}
Given that the assumptions of Lemma \ref{lem:estD3} are satisfied,
we apply \eqref{fbV} to \eqref{dVest} to obtain, for $t>\tau$,
\[
   \tot{}{t} V(t) &\leq& 2(\delta - 1)\lambda e^{-\mu\lambda\tau} D(t)
      + \frac{2\tau \lambda^3}{\delta} D(t) \int_{0}^{\tau} e^{\mu\lambda\tau} \,\d s \\
   &\leq&
   2\lambda e^{-\mu\lambda\tau} \left( \delta - 1 + \frac{\tau^2 \lambda^2}{\delta} e^{2\mu\lambda\tau} \right) D(t).
\]
Optimizing in $\delta>0$ gives $\delta:=\lambda\tau e^{\mu\lambda\tau}$ and
\[
   \tot{}{t} V(t) \leq  2\lambda e^{-\mu\lambda\tau} \left( 2\lambda\tau e^{\mu\lambda\tau} - 1 \right) D(t).
\]
By the definition \eqref{def:varphi} of the minimal interaction $\varphi=\varphi(t)$ we have the estimate
\[
   D(t) = \frac{1}{2} \sum_{i=1}^N \sum_{j=1}^N \psi_{ij} |v_i - v_j|^2
   \geq \frac{1}{2} \varphi(t) \sum_{i=1}^N \sum_{j=1}^N |v_i - v_j|^2
   = \varphi(t) V(t).
\]
Consequently, under the assumption
\(  \label{ass:lt}
   2\lambda\tau e^{\mu\lambda\tau} < 1,
\)
whose validity will be discussed below, we have
\[
   \tot{}{t} V(t) \leq  2\lambda e^{-\mu\lambda\tau} \left( 2\lambda\tau e^{\mu\lambda\tau} - 1 \right) \varphi(t) V(t).
\]
Denoting $\omega:=-2\lambda e^{-\mu\lambda\tau} \left( 2\lambda\tau e^{\mu\lambda\tau} - 1 \right)>0$, integrating in time
and applying \eqref{EstPhi} yields
\(   \label{flocking}
   V(t) \leq V(\tau) \exp\left( -\omega \int_\tau^t \varphi(s) \d s \right)
      \leq V(\tau) \exp\left(-\omega \int_\tau^t \psi\left(d_X(\tau) + \sqrt{2L^0}s \right) \d s \right).
\)
Consequently, if $\int^\infty \varphi(s) \d s=\infty$,
we have the asymptotic convergence of the velocity fluctuation to zero,
$\lim_{t\to\infty} V(t) = 0$.

By assumption \ref{ass:psi1}, namely that $\psi(r) \geq Cr^{-1+\gamma}$ for all $r>R$,
we have, asymptotically for large $t>0$,
\[
    \int^t \psi\left(d_X(\tau) + \sqrt{2L^0}s \right) \d s \gtrsim t^\gamma.
\]
Consequently, from \eqref{flocking},
\[
   V(t) \lesssim \exp\left(-\omega t^\gamma \right).
\]
A slight modification of the proof of Lemma \eqref{lem:EstPhi} gives
\[
   d_X(t) \leq d_X(\tau) + \int_\tau^t \sqrt{V(s)} \d s \lesssim d_X(\tau) + \int_\tau^t  \exp\left(-\omega s^\gamma/2 \right) \d s \qquad\mbox{for } t\geq \tau.
\]
The integral on the right-hand side is uniformly bounded,
implying the uniform boundedness of the position diameter $d_X(t) \leq \bar d_X <+\infty$
for all $t>0$. This in turn implies $\varphi(t) \geq \varphi(d_X(t)) \geq \varphi(\bar d_X)$
and from \eqref{flocking} we obtain the exponential decay of the velocity fluctuations
\[
      V(t) \leq V(\tau) \exp\left(-\omega \varphi(\bar d_X) (t-\tau) \right) \qquad\mbox{for all } t>\tau.
\]

It remains to discuss the validity of the assumptions
\eqref{ass:D2} and, resp., \eqref{ass:lt},
which we reprint here for the convenience of the reader:
\[
   \lambda \mu &>& \max \left\{ M , 4\lambda e^\frac{\mu\lambda \tau}{2} + \frac{\alpha\sqrt{2L^0}}{2} \right\},\\
   2\lambda\tau e^{\mu\lambda\tau} &<& 1,
\]
with $L^0$ given by \eqref{def:L0} and $M$ given by \eqref{ass:D1},
\[
   M := \max \left\{\sup_{s \in (-\tau,0)} \frac{|\dot{D}(s)|}{D(s)},
                 \frac{|\dot{D}(0^+)|}{D(0)}
              \right\} < \infty.
\]
A simple calculation reveals that a necessary condition for the verifiability of \eqref{ass:D2}
is $\lambda\tau < \frac{1}{2e}$.
We shall now show that, for a given initial datum $(x^0,v^0)$ and fixed values of $\lambda$ and $M$
there exists a critical delay $\tau_c$ such that if $\tau<\tau_c$, there exists $\mu>0$ such that
both assumptions are verified.
We denote
\[
   K :=  \frac{1}{\lambda} \max \left\{ \frac{|\dot{D}(0^+)|}{D(0)}, 4\lambda + \frac{\alpha\sqrt{2V(0)}}{2} \right\} < +\infty.
\]
Noting that \eqref{ass:lt} is equivalent to
\[
   \mu < \frac{1}{\lambda\tau} \ln\frac{1}{2\lambda\tau},
\]
let us choose $\tau_1>0$ such that
\(  \label{tau1:choice}
    K < \frac{1}{\lambda \tau_1} \ln\frac{1}{2\lambda\tau_1}.
\)
Note that the right-hand side above tends to $+\infty$ as $\tau_1\to 0+$,
therefore, such $\tau_1>0$ always exists.
Then, choose
\(  \label{mu:choice}
   \mu := \frac{1}{\lambda\tau_1} \ln\frac{1}{2\lambda\tau_1}.
\)
Note that, since the function $\frac{1}{x}\ln\frac{1}{2x}$ is decreasing for $x\in(0,\frac{1}{2e})$,
choosing $\tau<\tau_1$ guarantees that \eqref{mu:choice}, and thus \eqref{ass:lt}, is verified.
Then, for the above value of $\mu$, choose $\tau_2>0$ such that \eqref{ass:D2} is verified.
Again, this is possible since the right-hand side of \eqref{ass:D2} is equal to $\lambda K$ for $\tau=0$
(recall that $L^0=V(0)$ for $\tau=0$), it is continuous in $\tau$ and $\mu>K$.
Obviously, choosing $\tau_c := \min\{\tau_1, \tau_2\} >0$, both the conditions \eqref{ass:D2}, \eqref{ass:lt}
are satisfied for any $0 < \tau < \tau_c$, with $\mu$ given by \eqref{mu:choice}.

A more optimal value of the critical delay $\tau_c$ can be obtained if the initial datum $(x^0, v^0)$
is constant.
Then \eqref{ass:D1} reduces to
\[
   M := \frac{|\dot{D}(0^+)|}{D(0)} < \infty.
\]
Applying Lemma \ref{lem:D_ineq} with $\eps:=\lambda$,  noting that $D(0)=\widetilde{D}(0)\neq 0$, gives
\[
   \frac{|\dot{D}(0^+)|}{D(0)} \leq \left( 4\lambda + \frac{\alpha\sqrt{2L^0}}{2} \right).
\]
Consequently, \eqref{ass:D2} becomes
\(  \label{Ass:N3}
   \lambda \mu > 4\lambda e^\frac{\mu\lambda \tau}{2} + \frac{\alpha\sqrt{2L^0}}{2}.
\)
A simple analysis (c.f. \eqref{cond:simplified} and \eqref{L0const} in Remark \ref{rem:constantIC})
reveals that the maximum value of $\tau>0$ for which the above
is satisfiable with some $\mu>0$ is the unique solution of the equation
\[
    \frac{2}{\lambda\tau} \left(\ln\frac{1}{2\lambda\tau} - 1\right) = \frac{\alpha\sqrt{2L^0}}{2\lambda}.
\]
Let us denote this solution $\tau_1$ and choose
\[
   \mu:= \frac{\alpha\sqrt{2L^0}}{2\lambda} + \frac{2}{\lambda\tau_1}.
\]
Then \eqref{Ass:N3} is verified for each $0 < \tau < \tau_1$.
It then remains to choose $\tau_2>0$ such that  \eqref{ass:lt} is satisfied
and $\tau_c := \min\{\tau_1, \tau_2\}$.

\end{proof}

\begin{remark} \label{rem:N_depend}
A point worth mentioning is the asymptotic dependence of the approximation of
the critical value $\tau_c$ calculated above on the number of
agents $N$ for $N\gg 1$.
Since the expression \eqref{def:L0} for $L^0$ contains $N^2$
summation terms, we have $L^0 = \mathcal{O}(N^2)$,
while $M$, given by \eqref{ass:D1}, is of order $\mathcal{O}(1)$.
Consequently, in \eqref{tau1:choice} we may choose
$\tau_1 = \mathcal{O}(N^{-1})$.
This gives $\mu = \mathcal{O}(N\ln N)$ in \eqref{mu:choice}
and, in turn, $\tau_2 = \mathcal{O}(N^{-1})$ in \eqref{ass:D2}.
We conclude that $\tau_c = \mathcal{O}(N^{-1})$.

Let us note that the results of \cite{ErHaSu}, in particular, Theorem 3.1
and the numerical simulations in Section 4 suggest that,
as long as the right-hand side in \eqref{CS_delay2} is rescaled by $1/N$,
the critical delay does not depend on the number of agents $N$.
However, our method depends on relating the value of $\tau_c$
to the (unscaled) quadratic fluctuation \eqref{def:V} of the initial datum.
Therefore, it is not clear if an analytical estimate for the critical delay uniform in $N$
can be obtained using an appropriate modification of this method.
It is possible that a more elaborate treatment of the evolution of the velocity diameter
$d_V(t)$ might give a control which is uniform in $N$ and depends only on initial diameters and
the weight function $\psi(\cdot)$. We leave this question for a future work.
\end{remark}

\section{A remark about the mean-field description}\label{sec:meanfield}
While derivation of mean-field limits for Cucker-Smale type systems
without delay and their analysis is a well understood topic, see,
e.g., \cite{CCH2, CCR}, the introduction of delay terms may lead
to an ill-posed problem. Let us formally demonstrate this fact for
our system \eqref{CS_delay1}--\eqref{CS_delay2}.

We fix the initial particle density $f_0\in \mathcal{P}(\R^{2d})$,
where $\mathcal{P}(\R^{2d})$ denotes the set of probability measures
on the space $\R^{2d}$, and construct the time-dependent density
$f_t\in \mathcal{P}(\R^{2d})$ for $t>0$ by the push-forward \(
\label{ft}
   f_t := Z_t[f_{t-\tau}]\# f_0,
\) where $Z_t[g_t] := (Z_t^x, Z_t^v)[g_t]: \R^{2d} \to \R^{2d}$ is
the characteristic mapping corresponding to the system
\eqref{CS_delay1}--\eqref{CS_delay2}, i.e., \(  \label{char1}
   \tot{Z_t^x}{t} &=& Z_t^v,\\
   \label{char2}
   \tot{Z_t^v}{t} &=& F[g_t] (Z_{t-\tau}),
\) with $F[g_t]: \R^{2d}\to \R^{2d}$, \(  \label{Fgt}
   F[g_t](x,v) := \int_{\R^{2d}} \psi(|x-y|) (w-v) \d g_t(y,w),
\) for any time-dependent probability measure $g_t$. We shall write
$Z_t$ instead of $Z_t[f_{t-\tau}]$ in the sequel for the easy of
notation. Let us recall the change-of-variables formula
\[
   \int_{\R^{2d}} \eta\, \d(Z_t\#f_0) = \int_{\R^{2d}} \eta(Z_t) \d f_t
\]
for any function $\eta:\R^{2d}\to\R$ integrable with respect to the
push-forward measure $Z_t\# f_0$. Let us fix a smooth, compactly
supported test function $\xi\in C^\infty_c(\R^{2d})$ and calculate,
using \eqref{ft} and the above change-of-variables formula,
\[
   \tot{}{t} \int_{\R^{2d}} \xi \d f_t &=&
     \tot{}{t} \int_{\R^{2d}} \xi(Z_t) \, \d f_0 \\
     &=&
     \int_{\R^{2d}} \nabla_x \xi(Z_t) \cdot Z_t^v
        + \nabla_v \xi(Z_t) \cdot F[f_{t-\tau}] (Z_{t-\tau}) \, \d f_0 \,.
\]
Using again the change-of-variables formula, the right-hand side can
be written in the form
\[
     \int_{\R^{2d}} \nabla_x \xi \cdot v + \nabla_v \xi
     \cdot F[f_{t-\tau}] (Z_{t-\tau}(Z_t^{-1})) \d f_t \,.
\]
Consequently, if the inverse mapping $Z_t^{-1}$ was well defined,
then the formal mean-field limit for the system
\eqref{CS_delay1}--\eqref{CS_delay2} as $N\to\infty$ could be
written in the form \(  \label{kin0}
    \partial_t f_t + v\cdot\nabla_x f_t + \nabla_v\cdot (F[f_{t-\tau}]
    (Z_{t-\tau}(Z_t^{-1})) f_t) = 0,
\) subject to the initial datum $f_0\in\mathcal{P}(\R^{2d})$, with
$Z_t=Z_t[f_{t-\tau}]$ the solution of the characteristic system
\eqref{char1}--\eqref{char2}. Obviously, the composed mapping
$Z_{t-\tau}(Z_t^{-1})$ is supposed to act as a ``time machine'',
mapping the phase-space coordinates of a given characteristic at
time $t$ to its coordinates at time $t-\tau$. However, in general
the characteristics may overlap, so that the inverse mapping
$Z_t^{-1}$ is not defined. This can be seen for instance in the
trivial case of the ``noninteracting'' particles
\[
   \dot v_i(t) = -v_i(t-1)
\]
subject to constant initial datum on $(-1,0)$. For this system all
trajectories (i.e. characteristics) overlap at $t=1$ at zero.
 Let us also remark that even if the characteristic mapping
 $Z_t$ was surjective and thus invertible
 (say, for short enough time intervals),
 the ``practical'' usability of the kinetic equation \eqref{kin0} for numerical simulations
 would be strongly limited by the necessity to track the characteristics back in time
 through $Z_{t-\tau}(Z_t^{-1})$.

 Finally, let us remark that the situation strongly depends on the particular structure
 of the delay system.
 In particular, let us consider the modification of \eqref{CS_delay1}--\eqref{CS_delay2}
where the terms $x_i$ and $v_i$ in the equation for $\dot v_i$ are
evaluated in the present time $t$, i.e.,
\[
      \dot{x}_{i} &=& v_{i}  \\ 
      \dot{v}_{i} &=& \frac{\lambda}{N}\sum\limits_{j=1}^N
      \psi(|{x}_{i}-\widetilde{x}_{j}|)
      (\widetilde{v}_{j}-{v}_{i}). 
\]
For this case, one can easily show that the formal mean-field
limit as $N\to\infty$ is given by the kinetic equation
 \(  \label{kin2}
    \partial_t f_t + v\cdot\nabla_x f + \nabla_v\cdot (F[t_{t-\tau}] f_t) = 0,
 \)
 with $F[t_{t-\tau}]$ given by \eqref{Fgt}.
Obviously, there is no need for tracking of characteristics back
in time in this case, and standard analytical and numerical methods
apply to \eqref{kin2}.

\section*{Acknowledgment}
JH acknowledges the support of the KAUST baseline funds.

\quad (Jan Haskovec)
\\
\textsc{Computer, Electrical and Mathematical Sciences \&
Engineering}
\\
\textsc{King Abdullah University of Science and Technology, 23955
Thuwal, KSA}

\quad E-mail address: \textbf{jan.haskovec@kaust.edu.sa}
\\ \\

\quad (Ioannis Markou)
\\
\textsc{Institute of Applied and Computational Mathematics
(IACM-FORTH)}
\\
\textsc{N. Plastira 100, Vassilika Vouton GR - 700 13, Heraklion,
Crete, Greece}

\quad E-mail address: \textbf{ioamarkou@iacm.forth.gr}

\end{document}